\newlist{ienum}{enumerate}{3}
\setlist[enumerate]{label=(\roman*),partopsep=0pt,itemsep=0pt,topsep=3pt,wide,leftmargin=0.2em,labelindent=0em}
\setlist[ienum]{label=(\roman*),partopsep=0pt,itemsep=0pt,topsep=3pt,wide,leftmargin=.5em,labelindent=.5em}
\setlist{nolistsep}
    \pgfplotsset{
        compat=1.12,
    }
\newtheoremstyle{named}
{}
{}
{\itshape}
{}
{\bfseries}
{}
{\newline}
{\thmname{#1} \thmname{#2} (\textit{\thmname{#3}})}
\newtheoremstyle{mythm}
{}
{}
{\itshape}
{}
{\bfseries}
{}
{1em}
{\thmname{#1} \thmname{#2}}
\newtheoremstyle{futref}
{}
{}
{\itshape}
{}
{\bfseries}
{}
{1em}
{\thmname{#1} \thmname{#3}}
\newtheoremstyle{mydef}
{}
{}
{\itshape\let\emph\textbf}
{}
{\bfseries}
{.}
{1em}
{\thmname{#1}}
\newtheoremstyle{myex}
{}
{}
{}
{}
{\bfseries}
{}
{1em}
{\thmname{#1} \thmname{#2}}
\theoremstyle{futref}
\theoremstyle{mythm}
\newtheorem{theorem}{Theorem}
\newtheorem{prop}[theorem]{Proposition}
\newtheorem{lem}[theorem]{Lemma}
\newtheorem{cor}[theorem]{Corollary}
\theoremstyle{mydef}
\theoremstyle{myex}
\theoremstyle{named}
\theoremstyle{remark}
\let\oldproof=\proof
\let\oldendproof=\endproof
\renewenvironment{proof}{\oldproof[\textsc{\proofname}]}{\oldendproof}
\newcommand{\N}{\ensuremath{\mathbb{N}}}    
\newcommand{\Z}{\ensuremath{\mathbb{Z}}}    
\newcommand{\Q}{\ensuremath{\mathbb{Q}}}    
\renewcommand{\H}{\ensuremath{\mathbb{H}}}    
\newcommand{\C}{\ensuremath{\mathbb{C}}}    
\newcommand{\F}{\ensuremath{\mathcal{F}}}	
\newcommand{\Pen}{\ensuremath{\mathcal{P}}}	
\newcommand{\cm}{\ensuremath{\mathcal{C}}}	
\newcommand{\Quads}{\ensuremath{\mathcal{Q}}}	
\newenvironment{smatrix}{\bigl(\begin{smallmatrix}}{\end{smallmatrix}\bigr)}
\let\Re\relax
\DeclareMathOperator{\Re}{\operatorname{Re}}
\let\Im\relax
\DeclareMathOperator{\Im}{\operatorname{Im}}
\DeclareMathOperator{\sl2z}{\operatorname{SL_2(\Z)}}
\DeclareMathOperator{\gcdt}{\operatorname{gcd_2}}
\let\mod\bmod
\newcommand{\twostack}[2]{\subarray{c}\scriptscriptstyle#1\\\scriptscriptstyle#2\endsubarray}
\let\emph\textit
\let\subset\subseteq
\numberwithin{theorem}{section}
\title{Integrality properties in the Moduli Space of Elliptic Curves: CM Case}
\author{Stefan Schmid}
\date{}
\begin{document}

\setlist[enumerate]{label=(\roman*),partopsep=0pt,itemsep=0pt,topsep=3pt,wide,leftmargin=0.2em,labelindent=0em}
\setlist[ienum]{label=(\roman*),partopsep=0pt,itemsep=0pt,topsep=3pt,wide,leftmargin=.5em,labelindent=.5em}

\maketitle

\begin{abstract}
A result of Habegger \cite{habSM} shows that there are only finitely many singular moduli
such that $j$ or $j-\alpha$ is an algebraic unit. The result uses Duke's Equidistribution Theorem
and is thus not effective. For a fixed $j$--invariant $\alpha \in \bar{\Q}$ of an
elliptic curve without complex multiplication, we prove that there are only finitely
many singular moduli $j$ such that $j-\alpha$ is an algebraic unit.
The difference to \cite{habSM} is that we give explicit bounds.
\end{abstract}

\thispagestyle{empty}

\section{Introduction}
\newcommand{\T}{\ensuremath{\mathcal{T}}}

We denote Klein's modular function defined on the upper half--plane by $j$.
A singular modulus is the value of the $j$--function evaluated at
an imaginary quadratic $\tau$. Singular moduli correspond to  the $j$--invariants
of elliptic curves with complex multiplication.
A classical result by Kronecker states that singular moduli are algebraic integers.

In 2011, Masser asked at the AIM workshop on unlikely intersections in algebraic
groups and Shimura varieties in Pisa, if there are only finitely many singular moduli
that are algebraic units. His question was motivated by
\cite{bilumasserzannier} and a result on the Andr\'e--Oort conjecture.
In 2014, Habegger gave an answer in \cite{habSM} to this question by proving
that at most finitely many singular moduli are algebraic units.
No examples were known at that time.
He also proved that only finitely many singular moduli exist such that $j-\alpha$
is a unit. Here examples are known, e.g.~one can take $\alpha = 1$ and then $j=0$
is a solution.
His results in \cite{habSM} rely on Duke's Equidistribution Theorem and are thus
not effective.
In 2018 Bilu, Habegger and K\"uhne proved in \cite{bhk} that no singular
modulus is a unit.

In the work at hand we want to give effective bounds for the number of
singular moduli $j$ such that $j-\alpha$ is a unit, where $\alpha \in \bar\Q$
is the $j$--invariant of an elliptic curve without complex multiplication.
More specifically, we obtain a bound on the discriminant of the endomorphism ring
of the singular moduli satisfying the condition.
See a recent result of Li \cite{li2018singular} for differences of singular
moduli.

We fix a singular modulus $j$. Elliptic curves with $j$--invariant $j$ have
the same full endomorphism ring. Let $\Delta$ denote the discriminant of this
ring.
We want to prove the following result.

\begin{theorem}\label{thm:cm_finite}
		Let $j$ be a singular modulus and let $\Delta$ be its discriminant. Let $\alpha$ be an
	algebraic number that is the $j$--invariant of an elliptic curve without complex multiplication.
	If we assume that $j-\alpha$ is an algebraic unit, then $|\Delta|$ is bounded from above by
	an explicit constant.
	Thus there are only finitely many singular moduli $j$ such that $j-\alpha$ is an algebraic unit.
	The constant can be found on page \pageref{eq:delta_bound}.

\end{theorem}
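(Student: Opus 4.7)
The plan is to reduce the unit condition to an effective lower bound on the rational integer
$$R:=\operatorname{Res}(H_\Delta,M_\alpha)=\prod_{i=1}^{h(\Delta)}\prod_{k=1}^{d_\alpha}(j_i-\alpha_k),$$
where $H_\Delta\in\Z[X]$ is the Hilbert class polynomial of discriminant $\Delta$ (with roots the Galois conjugates $j_i$ of $j$) and $M_\alpha\in\Z[X]$ is the minimal polynomial of $\alpha$ (with roots $\alpha_k$). Both polynomials are monic and, since $\alpha$ is non-CM, share no root; hence $R$ is a nonzero rational integer equal up to sign to $N_{\Q(j,\alpha)/\Q}(j-\alpha)^m$ for some positive integer $m$. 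The unit hypothesis therefore forces $|R|=1$, so it suffices to prove $|R|>1$ whenever $|\Delta|$ exceeds an explicit constant depending only on $\alpha$.

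The main positive contribution to $\log|R|$ comes from an archimedean analysis of the conjugates $j_i=j(\tau_i)$ with $\tau_i=(-b_i+i\sqrt{|\Delta|})/(2a_i)$ indexed by the reduced forms $(a_i,b_i,c_i)$ of discriminant $\Delta$. The $q$-expansion $j(\tau)=q^{-1}+744+\sum_{n\geq1}c_n q^n$ yields an effective estimate such as $\bigl||j(\tau)|-e^{2\pi\,\Im\tau}\bigr|\leq 2079$ for $\Im\tau\geq1$. In particular the principal conjugate $j_1$ satisfies $|j_1|\geq e^{\pi\sqrt{|\Delta|}}-2079$, and more generally any $j_i$ with $a_i$ below an explicit threshold $A=A(|\alpha|)$ satisfies $|j_i|\geq 2\max_k|\alpha_k|$, hence $|j_i-\alpha_k|\geq|j_i|/2$. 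Summed over such $i$, this already produces a positive contribution of order $d_\alpha\pi\sqrt{|\Delta|}$ to $\log|R|$, dominated by the principal term. For the remaining ``small'' factors I would invoke a Liouville-type inequality for the nonzero algebraic integer $j_i-\alpha_k$ of degree at most $h(\Delta)d_\alpha$,
$$\log|j_i-\alpha_k|\geq -h(\Delta)d_\alpha\bigl(h(j)+h(\alpha)+\log 2\bigr),$$
combined with the Mahler-measure estimate $h(j)\leq\pi\sqrt{|\Delta|}/h(\Delta)+O(\log|\Delta|)$ and an effective class-number lower bound such as Heilbronn's $h(\Delta)\gg\log|\Delta|$.

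Assembling these pieces should give $\log|R|\geq c_\alpha\sqrt{|\Delta|}-E_\alpha(\Delta)$, positive once $|\Delta|$ exceeds an explicit $\Delta_0(\alpha)$. The main obstacle is controlling the total Liouville loss: the naive estimate summed over all $h(\Delta)d_\alpha$ pairs would swamp the positive main term, so one must limit the number of factors in which $|j_i-\alpha_k|$ is genuinely close to zero. This can be handled either by an effective root-separation estimate for the monic integer polynomial $H_\Delta$, ensuring that only $O_\alpha(1)$ of its roots can cluster near any single $\alpha_k$, or by exploiting the bounded height of the fixed $\alpha$ to derive a direct lower bound on its distance to algebraic integers of controlled height and degree. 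Tracking every constant through the $q$-expansion, the Mahler-measure estimate, Heilbronn's class-number bound and the Liouville inequality then produces the explicit bound on $|\Delta|$ referenced in the statement.
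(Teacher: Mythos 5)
Your reduction to the resultant $R=\operatorname{Res}(H_\Delta,M_\alpha)$ is a legitimate reformulation: $R$ is a nonzero rational integer, it equals $\pm N_{\Q(j,\alpha)/\Q}(j-\alpha)^m$ for some $m\ge 1$, and the unit hypothesis forces $|R|=1$. This is just a different bookkeeping for the same quantity the paper works with, namely the Weil height of the unit $j-\alpha$ via the formula $h(j-\alpha)=-\tfrac{1}{[\Q(j,\alpha):\Q]}\sum_{|\sigma(j-\alpha)|<1}\log|\sigma(j-\alpha)|$. The positive contribution you extract from the dominant form and the $q$-expansion (of size roughly $d_\alpha\,\pi\sqrt{|\Delta|}$) is also correct and parallels the paper's elementary lower bound $h(j-\alpha)\gg|\Delta|^{1/2}/\cm(\Delta)$.

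The gap is in your treatment of the ``small'' factors, and it is fatal as stated. The Liouville-type bound
\begin{displaymath}
\log|j_i-\alpha_k| \;\ge\; -\,h(\Delta)\,d_\alpha\bigl(h(j)+h(\alpha)+\log 2\bigr)
\end{displaymath}
has right-hand side of order $-d_\alpha\,\pi\sqrt{|\Delta|}$, since $h(j)\asymp\pi\sqrt{|\Delta|}/h(\Delta)$. That is already of the \emph{same order} as the entire positive main term. So even if root separation for $H_\Delta$ or an effective equidistribution count let you reduce to $O_\alpha(1)$ bad pairs, a single bad factor still eats the whole gain, and the sign of $\log|R|$ remains undetermined. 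Root separation cannot rescue this: for a monic integer polynomial of degree $h(\Delta)$ and height $e^{O(\sqrt{|\Delta|})}$ the generic separation bound is again exponentially small in $\sqrt{|\Delta|}$, and in any case you still need a lower bound on $|j_i-\alpha_k|$, not on $|j_i-j_{i'}|$. Likewise, a ``distance of $\alpha$ to algebraic integers of bounded height and degree'' bound is itself a Liouville-type bound and inherits the same exponential loss, since the height of $j_i$ grows like $\sqrt{|\Delta|}/h(\Delta)$.

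What the paper actually uses to close this gap is an effective theorem on linear forms in elliptic logarithms (David), applied after translating the condition $j(\tau_\sigma)$ close to $j(\xi_\sigma)$ into smallness of the linear form $(\gamma\tau-\alpha)\omega_2-(\delta\tau-\beta)\omega_1$ in two periods of the non-CM curve. This yields the polylogarithmic lower bound $\log|j_i-\alpha_k|\ge -c_2(\log|\Delta|)^4$, which is negligible against $\sqrt{|\Delta|}$; the non-CM hypothesis on $\alpha$ is precisely what guarantees nondegeneracy of the linear form. The number of bad pairs is then controlled not by root separation but by an explicit count $\cm(\Delta;\xi;\varepsilon)$ of reduced quadratic forms whose CM point falls in an $\varepsilon$-neighborhood of a conjugate of $\xi$ (Lemma \ref{lem:delta_count} and Corollary \ref{cor:eps_nbh}). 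To repair your argument you would need to replace the Liouville step by such a Baker--David input; without it the strategy does not produce $|R|>1$ for large $|\Delta|$.
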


The sketch of the proof is as
follows. Write $j(\xi) = \alpha \in \bar\Q$ where the elliptic curve
associated to $\alpha$ does not have complex multiplication.
We can assume that $\xi$ is in the classical fundamental domain $\F$
of the upper half--plane.
To a singular modulus $j$ we have attached an elliptic
curve with complex multiplication.
We can write $\Delta = Df^2$ where $f$ is the conductor
of the endomorphism ring in the full ring of integers of
$\Q(\sqrt{\Delta})$, and $D$ is the discrimiant of that field.
The Galois conjugates of $j$ form a full orbit of length 
the class number $\cm(\Delta)$.
We write $\cm(\Delta; \xi; \varepsilon)$ for the number of
singular moduli  with discriminant $\Delta$ that can be written in the form $j(\tau)$
with $\tau \in \F$ and such that $|\tau - \xi| < \varepsilon$.
We prove an explicit bound on $\cm(\Delta; \xi; \varepsilon)$
which is given by
\begin{equation}\label{eq:sm_bound}
	\cm(\Delta; \xi; \varepsilon)
	\le F(\Delta)\left(32 |\Delta|^{1/2} \varepsilon^2 \log\log(|\Delta|^{1/2})
			+ 11\vert\Delta\vert^{1/2}\varepsilon
			+ 2 \right)
\end{equation}
for $|\Delta| \ge 10^{14}$ and $0 < \varepsilon < 1/2$. Here
\begin{displaymath}
	F(\Delta) = \max\left\{2^{\omega(a)} ; a \le |\Delta|^{1/2} \right\},
\end{displaymath}
and $\omega(n)$ is the number of distinct prime divisors of $n$.

The idea is to give lower and upper bounds of the logarithmic (Weil) height
of $j-\alpha$ that contradict each other.
The height of an algebraic number basically measures its complexity.
Let $K$ be a number field containing $\beta \in \bar\Q$.
If $M_K$ is a set of representatives of non--trivial absolute values extending
the $p$--adic absolute values and the usual absolute value, and
$[K_\nu:\Q_\nu]$, $\nu \in M_K$, denotes the local degrees, then
the height of $\beta \in \bar\Q$ is defined by
\begin{displaymath}
	h(\beta) = \frac 1{[K:\Q]}\sum_{\nu \in M_K} [K_\nu:\Q_\nu]\log \max\{1,|\beta|_\nu\}.
\end{displaymath}
The definition does not depend on the number field $K$.
If $\beta$ is a unit in the ring of integers, then the height can be computed through
\begin{displaymath}
	h(\beta) = h(\beta^{-1}) = \frac 1{[K:Q]} \sum_{\lvert \sigma(\beta^{-1}) \rvert > 1} \log \lvert \sigma(\beta^{-1}) \rvert
		= -\frac 1{[K:Q]} \sum_{\lvert \sigma(\beta) \rvert < 1} \log \lvert \sigma(\beta) \rvert.
\end{displaymath}
where $\sigma$ runs over all field embeddings $\sigma\colon K \hookrightarrow \C$.
More information on heights can be found in \cite{Gubler}.

Now if $j-\alpha$ is an algebraic unit, the height can be bounded as
\begin{equation}\label{eq:j-a_bound}
	h(j-\alpha) \ll \frac{\cm(\Delta; \xi'; \varepsilon)}{\cm(\Delta)}(\log|\Delta|)^4
						- \log \varepsilon
\end{equation}
for some $\xi' \in \F$ associated to $\xi$.
The constant in the inequality depends on $\alpha$.
We put $E(\Delta) = F(\log|\Delta|)^4$ and roughly choose $\varepsilon$ to be
\begin{displaymath}
	\varepsilon = \frac{\cm(\Delta)}{E(\Delta)|\Delta|^{1/2}}.
\end{displaymath}
If we substitute this and \eqref{eq:sm_bound} into \eqref{eq:j-a_bound}
and use estimates for $\omega(n)$ by Robin \cite{robin1983estimation}
we get
\begin{equation}\label{eq:j-a_bound_fin}
	h(j-\alpha) \ll \frac{E(\Delta)}{\cm(\Delta)} + \log\frac{E(\Delta)|\Delta|^{1/2}}{\cm(\Delta)}.
\end{equation}
To bound $|\Delta|$ from above we need lower bounds for the
height of $j-\alpha$. One can prove
\begin{displaymath}
	h(j-\alpha) \gg \log|\Delta|
\end{displaymath}
and
\begin{displaymath}
	h(j-\alpha) \gg \frac{|\Delta|^{1/2}}{\cm(\Delta)}.
\end{displaymath}
The first inequality follows from work of Colmez \cite{colmez} and
Nakkajima--Taguchi \cite{nakkajimataguchi}, and the second inequality is more elementary.
Again the bounds depend on $\alpha$.
Combining the lower bounds with the upper bound from \eqref{eq:j-a_bound_fin}
we obtain
\begin{displaymath}
	\max\left\{\frac{|\Delta|^{1/2}}{\cm(\Delta)},\log|\Delta|\right\} \ll
			\frac{E(\Delta)}{\cm(\Delta)} + \log\frac{|\Delta|^{1/2}}{\cm(\Delta)} + \log E(\Delta)
\end{displaymath}
for large $|\Delta|$.
Further analysis shows that $E(\Delta)|\Delta|^{-1/2} = |\Delta|^{o(1)}$
and $\log E(\Delta)/\log |\Delta| = o(1)$. Thus the inequality can not
hold for large values of $|\Delta|$.
All constants in the above deductions can be made explicit, but
some are very large.

\section{Bounding points in the fundamental domain}

Let $\Quads(\Delta)$ be the set ($\subset \Z^3$) of coefficients representing reduced primitive,
positive definite quadratic forms with discriminant $\Delta$ and let $\cm(\Delta)$ be the
class number.
We will write $\Delta = Df^2$ throughout this exposition, where $D$ is the discriminant of the
imaginary quadratic field $\Q(\sqrt{\Delta})$ and $f \in \N$ is called the \emph{conductor}.
For $\xi \in \F$ and $\varepsilon > 0$ we define
\begin{displaymath}
	\cm(\Delta; \xi; \varepsilon) = \#\left\{ (a,b,c) \in \Quads(\Delta);
		\left\vert \frac{-b + \Delta^{1/2}}{2a} - \xi \right\vert < \varepsilon \right\}.
\end{displaymath}
We also define the function $F$ of $\Delta$ by
\begin{displaymath}
	F = F(\Delta) = \max\left\{2^{\omega(a)} ; a \le |\Delta|^{1/2} \right\},
\end{displaymath}
where $\omega(n)$ is the number of distinct prime divisors of $n$.
We also define the modified conductor by
\begin{displaymath}
	\tilde{f} = \begin{cases}f&D \equiv 1 \mod 4,\\2f&D\equiv 0\mod 4.
	\end{cases}
\end{displaymath}
Then $\Delta/\tilde f^2$ is square--free.

Let $\sigma_k(n) = \sum_{d|n}d^k$.
We are now ready to state the first lemma that gives a bound on the $\tau$ in a neighborhood of a
fixed point such that $j(\tau)$ is a singular modulus of fixed discriminant.
While this is a generalization of Theorem 2.1 in \cite{bhk}, the constants are not as good as
in that very paper.

\begin{lem}\label{lem:delta_count}
	Let $\Delta$ be a negative integer, $y = \Im(\xi) \ge \sqrt{3}/2$ and $0 < \varepsilon < 1/4$. Then
	\begin{displaymath}
		\cm(\Delta; \xi; \varepsilon)
			\le F(\Delta)\left(32\frac{\sigma_1\left(\tilde f\right)}{\tilde f} \frac{|\Delta|^{1/2}}{4y^2 - 1} \varepsilon^2
				+ 8 \sigma_0\left(\tilde f\right) \left\vert\frac{\Delta}3\right\vert^{1/4}\varepsilon
				+ 8\frac{|\Delta|^{1/2}}{4y^2 - 1} \varepsilon
				+ 2 \right)
	\end{displaymath}
\end{lem}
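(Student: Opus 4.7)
The plan is to parametrize elements $(a,b,c) \in \Quads(\Delta)$ in the $\varepsilon$--disk around $\xi$ by the pair $(a,b)$ alone, since $c = (b^2-\Delta)/(4a)$ is then determined. The disk condition $|\tau - \xi| < \varepsilon$, with $\tau = (-b + i|\Delta|^{1/2})/(2a)$, splits into separate constraints on $a$ and $b$ together with the quadratic congruence $b^2 \equiv \Delta \pmod{4a}$.

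First, $|\Im\tau - y| < \varepsilon$ with $\Im\tau = |\Delta|^{1/2}/(2a)$ confines $a$ to the interval
\begin{displaymath}
\left(\frac{|\Delta|^{1/2}}{2(y+\varepsilon)},\; \frac{|\Delta|^{1/2}}{2(y-\varepsilon)}\right),
\end{displaymath}
of length $|\Delta|^{1/2}\varepsilon/(y^2-\varepsilon^2) \le L_a := 4|\Delta|^{1/2}\varepsilon/(4y^2-1)$, using $\varepsilon < 1/2$ to obtain $y^2-\varepsilon^2 \ge (4y^2-1)/4$. For each admissible $a$, $|\Re\tau - x| < \varepsilon$ restricts $b$ to an interval of length $4a\varepsilon$, and moreover $b^2 \equiv \Delta \pmod{4a}$. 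Writing $\rho(a) := \#\{b \bmod 2a : b^2 \equiv \Delta \pmod{4a}\}$, any interval of length $4a\varepsilon$ contains at most $4a\varepsilon/(2a) + 1 = 2\varepsilon + 1$ integers in each fixed class mod $2a$, so there are at most $\rho(a)(2\varepsilon + 1)$ admissible values of $b$ for this $a$.

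Next, bound $\rho(a)$. Writing $\Delta = Df^2$ with $D$ fundamental and applying the Chinese Remainder Theorem, $\rho(a)$ factors over primes dividing $a$. For $p \nmid \tilde f$, Hensel's lemma yields at most two solutions to $b^2 \equiv \Delta \pmod{p^{v_p(4a)}}$; for $p \mid \tilde f$ (with separate care at $p = 2$), an explicit local count in terms of $v_p(\tilde f)$ and $v_p(a)$ applies, dominated by a constant times $\sigma_0$ of the $p$--part of $\gcd(a,\tilde f)$. Since the reduced-form condition $|b| \le a \le c$ forces $a \le |\Delta/3|^{1/2} \le |\Delta|^{1/2}$, one has $2^{\omega(a)} \le F(\Delta)$, and altogether $\rho(a) \le c_1 F(\Delta)\,\sigma_0(\gcd(a,\tilde f))$ for a small explicit constant $c_1$.

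Finally, using the identity $\sigma_0(\gcd(a,\tilde f)) = \sum_{d \mid \tilde f,\, d \mid a} 1$ to exchange summation order gives
\begin{displaymath}
\sum_{a} \sigma_0(\gcd(a,\tilde f)) \le \sum_{d \mid \tilde f}\left(\frac{L_a}{d}+1\right) = L_a\,\frac{\sigma_1(\tilde f)}{\tilde f} + \sigma_0(\tilde f),
\end{displaymath}
with the left-hand sum taken over admissible integers $a$. Combining with the factor $(2\varepsilon + 1)c_1 F(\Delta)$ and substituting the bound on $L_a$ yields four terms matching those in the claim; the $|\Delta/3|^{1/4}$ factor in the middle term arises after splitting the $a$-range at the threshold $|\Delta/3|^{1/4}$ and using $\sqrt{a} \le |\Delta/3|^{1/4}$ to bound the conductor-only contribution for small $a$. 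The chief obstacle is the third step: the case-by-case local analysis at primes dividing $\tilde f$ and at $p = 2$ requires elementary but careful Hensel-type bookkeeping in order to match the explicit numerical constants and divisor functions appearing in the stated inequality.
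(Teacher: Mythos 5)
Your overall frame — localize $a$ by the imaginary part, localize $b$ modulo a congruence by the real part, then sum — matches the paper, but the central step, your bound on $\rho(a)$, is false, and the final assembly does not match the stated terms.

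You set $\rho(a) := \#\{b \bmod 2a : b^2 \equiv \Delta \pmod{4a}\}$ and claim $\rho(a) \le c_1 F(\Delta)\,\sigma_0(\gcd(a,\tilde f))$. This fails when $a$ and $\Delta$ share a large common square. Take $\Delta = -3q^4$ for a large prime $q$, so $\tilde f = q^2$, and $a = q^2$ (which is reduced, since $a \le |\Delta/3|^{1/2} = q^2$). Then $q^2 \mid b^2$ forces $q \mid b$, and the substitution $b = qb'$ leaves a congruence for $b'$ with $q$ admissible classes modulo $2q$; hence $\rho(a) = q$. But $F(\Delta) = q^{O(1/\log\log q)}$ and $\sigma_0(\gcd(a,\tilde f)) = \sigma_0(q^2) = 3$, so the right-hand side is $\ll q$. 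No absolute $c_1$ saves this. The count of solutions modulo $2a$ genuinely blows up with $\gcdt(a,\Delta)$ (the largest $d$ with $d^2\mid a$, $d^2\mid\Delta$), and $\sigma_0(\gcd(a,\tilde f))$ does not capture that growth.

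The paper's proof avoids this by not counting solutions modulo $a$ (or $2a$) at all. It invokes Lemma 2.4 of \cite{bhk}: the solutions of $b^2 \equiv \Delta \pmod a$ fall into at most $2^{\omega(a)+1}$ residue classes modulo the \emph{smaller} modulus $a/\gcdt(a,\Delta)$. An interval of length $4a\varepsilon$ then contains at most $\bigl(4\varepsilon\,\gcdt(a,\Delta) + 1\bigr)2^{\omega(a)+1}$ admissible $b$. The large factor $\gcdt(a,\Delta)$ thus lands \emph{inside} the $\varepsilon$-proportional term, where summing over $a$ and exchanging with divisors $d\mid\tilde f$ yields the $\sigma_1(\tilde f)/\tilde f$ and $\sigma_0(\tilde f)\,|\Delta/3|^{1/4}$ contributions, while the $+1$ term simply gives $2F\cdot\#(I\cap\Z)$ and hence the clean trailing $2F|I| + 2F$. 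Your assembly, $(2\varepsilon+1)c_1F\bigl(L_a\sigma_1(\tilde f)/\tilde f + \sigma_0(\tilde f)\bigr)$, by contrast places $\sigma_1(\tilde f)/\tilde f$ and $\sigma_0(\tilde f)$ factors on the $\varepsilon$-free $L_a$ and constant terms, which is strictly worse than the claimed bound and cannot be repaired by splitting the $a$-range. The missing idea is precisely the change of modulus to $a/\gcdt(a,\Delta)$.
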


\begin{proof}
	We start with $|\tau - \xi| < \varepsilon$.
	This implies that the real and imaginary parts satisfy
	\begin{align*}
		\Im(\tau) &\in (\Im(\xi) - \varepsilon, \Im(\xi) + \varepsilon)	\\
		\Re(\tau) &\in (\Re(\xi) - \varepsilon, \Re(\xi) + \varepsilon).
	\end{align*}
	Now $\tau$ is of the form $(-b+\sqrt{\Delta})/2a$ and thus $\Im(\tau) = |\Delta|^{1/2}/2a$ and
	$\Re(\tau) = -b/2a$. This amounts to
	\begin{displaymath}
		y - \varepsilon < \frac{|\Delta|^{1/2}}{2a} < y + \varepsilon
	\end{displaymath}
	or equivalently
	\begin{displaymath}
		a \in \left( \frac{|\Delta|^{1/2}}{2y + 2\varepsilon}, \frac{|\Delta|^{1/2}}{2y - 2\varepsilon} \right) =: I.
	\end{displaymath}
	For $b$ we obtain
	\begin{equation}\label{eq:b_interval}
		2a(\Re(\xi)-\varepsilon) < -b < 2a(\Re(\xi)+\varepsilon),
	\end{equation}
	so $b$ lies in an interval of length $4a\varepsilon$.
	For two integers $m$ and $n$ we denote by $\gcdt(m,n)$ the greatest common divisor $d$ of $m$ and $n$
	such that $d^2|m$ and $d^2|n$.
	We have $\Delta = b^2 - 4ac$, so in particular $b^2 \equiv \Delta \mod a$.
	Thus, the residue classes modulo $a/\gcd_2(a,\Delta)$ of $b \in \Z$ satisfying $b^2 \equiv \Delta \mod a$
	is at most $2^{\omega(a/\gcd(a,\Delta))+1}$ by Lemma 2.4 in \cite{bhk}.
	Note that we have $\omega(a/\gcd(a,\Delta)) \le \omega(a)$.
	But $b$ also lies in the interval given in equation \eqref{eq:b_interval}, so that by
	Lemma 2.5 of \cite{bhk} there are at most
	\begin{equation}\label{eq:possible_b}
		\left(\frac{2a(\Re(\xi) + \varepsilon) - 2a(\Re(\xi) - \varepsilon)}{a/\gcdt(a,\Delta)} + 1\right)2^{\omega(a)+1}
		= (4\varepsilon \gcdt(a,\Delta) + 1)2^{\omega(a)+1}
	\end{equation}
	possible $b$'s for any fixed $a$.
	We have $a \le |\Delta/3|^{1/2}$ by Lemma 5.3.4 in \cite{cohen2013cant}, so that $2^{\omega(a)} \le F$.
	Using the equality in \eqref{eq:possible_b} and applying Lemma 5.3.4 of \cite{cohen2013cant} in the second inequality we get
	\begin{align*}
		\cm(\Delta; \xi; \varepsilon) &\le 8\varepsilon \sum_{a \in I\cap \Z} \gcdt(a,\Delta)2^{\omega(a)}
													+ 2\sum_{a \in I\cap \Z} 2^{\omega(a)}	\\
			&\le 8\varepsilon F \sum_{a \in I\cap \Z} \gcdt(a,\Delta) + 2F \#(I\cap\Z)	\\
			&\le 8\varepsilon F \sum_{\twostack{d^2 | \Delta}{d \le |\Delta/3|^{1/4}}} d\cdot \#(I \cap d^2\Z) + 2F \#(I\cap\Z).
	\end{align*}
	Here we used Lemma 5.3.4 in \cite{cohen2013cant} in the last step again.
	But since $\Delta/\tilde{f}^2$ is square--free we obtain
	\begin{align*}
			\cm(\Delta; \xi; \varepsilon)
			\le 8\varepsilon F \sum_{\twostack{d|\tilde{f}}{d \le |\Delta/3|^{1/4}}} d\left( \frac{|I|}{d^2} + 1\right)
					+ 2F\left(|I| + 1\right),
	\end{align*}
	where $|I|$ is the length of $I$. This can be further simplified to
	\begin{align*}
		\cm(\Delta; \xi; \varepsilon) 
			&\le 8\varepsilon F|I| \sum_{\twostack{d|\tilde{f}}{d \le |\Delta/3|^{1/4}}} d^{-1}
			+ 8\varepsilon F\sum_{\twostack{d|\tilde{f}}{d \le |\Delta/3|^{1/4}}} d
					+ 2F\left(|I| + 1\right)	\\
			&\le 8\varepsilon F|I| \frac{\sigma_1(\tilde f)}{\tilde f}
			+ 8\varepsilon F \left|\frac{\Delta}{3}\right|^{1/4} \sigma_0(\tilde f)
					+ 2F\left(|I| + 1\right).
	\end{align*}
	The length of $I$ can be estimated by
	\begin{displaymath}
		\frac{|\Delta|^{1/2}}{2y - 2\varepsilon} - \frac{|\Delta|^{1/2}}{2y + 2\varepsilon}
		= |\Delta|^{1/2}\frac{2y+2\varepsilon-(2y-2\varepsilon)}{4y^2-4\varepsilon^2}
		\le |\Delta|^{1/2}\frac{4\varepsilon}{4y^2-1}.
	\end{displaymath}
	This gives the desired inequality.
\end{proof}

The next corollary gives a bound on $\cm(\Delta;\xi;\varepsilon)$ just in terms of $\Delta$ and $\varepsilon$.

\begin{cor}\label{cor:eps_nbh}
	For $|\Delta| \ge 10^{14}$ and $0 < \varepsilon < 1/4$ we have
	\begin{displaymath}
		\cm(\Delta; \xi; \varepsilon)
		\le F(\Delta)\left(32 |\Delta|^{1/2} \varepsilon^2 \log\log(|\Delta|^{1/2})
				+ 11\vert\Delta\vert^{1/2}\varepsilon
				+ 2 \right).
	\end{displaymath}
\end{cor}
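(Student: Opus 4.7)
The plan is to start from Lemma \ref{lem:delta_count} (applied with the implicit assumption $\xi\in\F$, hence $y\ge\sqrt{3}/2$ and $4y^{2}-1\ge 2$), and then dispose of the arithmetic quantities $\sigma_{1}(\tilde f)/\tilde f$ and $\sigma_{0}(\tilde f)$ in favour of functions of $|\Delta|$ alone by means of Robin's estimates \cite{robin1983estimation}. Since $\tilde f^{2}\mid\Delta$ we always have $\tilde f\le|\Delta|^{1/2}$, and the hypothesis $|\Delta|\ge 10^{14}$ provides a comfortable threshold for the error terms to be absorbed into the main terms.

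First I would handle the $\varepsilon^{2}$-term. Using $1/(4y^{2}-1)\le 1/2$, the coefficient becomes $16\,\sigma_{1}(\tilde f)/\tilde f\cdot|\Delta|^{1/2}$. Robin's unconditional inequality gives $\sigma_{1}(n)/n\le e^{\gamma}\log\log n+c/\log\log n$ for $n\ge 3$, with an explicit $c$. Since $\log\log\tilde f\le \log\log|\Delta|^{1/2}$ and $e^{\gamma}<2$, the factor $16(\sigma_{1}(\tilde f)/\tilde f)$ is bounded by $32\log\log|\Delta|^{1/2}$ once $|\Delta|$ is large enough that the error term in Robin's bound is absorbed; the threshold $|\Delta|\ge 10^{14}$ gives $\log\log|\Delta|^{1/2}\approx 2.78$, which is ample room. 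The small cases where $\tilde f$ itself is $\le 2$ are trivial because $\sigma_{1}(\tilde f)/\tilde f\le 3/2$.

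Next I would collect the two $\varepsilon$-terms, giving the coefficient
\begin{displaymath}
8\,\sigma_{0}(\tilde f)\,|\Delta/3|^{1/4}+\frac{8|\Delta|^{1/2}}{4y^{2}-1}.
\end{displaymath}
The second summand is at most $4|\Delta|^{1/2}$. For the first, I would use Robin's bound $\sigma_{0}(n)\le 2^{(1+o(1))\log n/\log\log n}$, more concretely $\sigma_{0}(n)\le n^{1.066\log 2/\log\log n}$ valid for $n\ge 3$, applied to $n=\tilde f\le|\Delta|^{1/2}$. This yields $\sigma_{0}(\tilde f)=|\Delta|^{o(1)}$, and a direct check at $|\Delta|=10^{14}$ shows that $8\cdot 3^{-1/4}\sigma_{0}(\tilde f)|\Delta|^{1/4}\le 7|\Delta|^{1/2}$ throughout the range $|\Delta|\ge 10^{14}$ (because $|\Delta|^{1/4}$ grows polynomially while $\sigma_{0}(\tilde f)$ grows only quasi-logarithmically). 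Summing gives the stated constant $11$.

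The main obstacle is purely numerical: pinning down the constants $32$ and $11$ precisely at the threshold $|\Delta|\ge 10^{14}$. This amounts to verifying two monotonicity statements, namely that $e^{\gamma}\log\log t + c/\log\log t \le 2\log\log t$ and that $8\cdot 3^{-1/4} t^{1.066\log 2/\log\log t}\le 7 t$, for all $t\ge 10^{7}=|\Delta|^{1/2}_{\min}$. Both are elementary calculus exercises once Robin's explicit constants are in hand, so nothing conceptually new enters beyond careful bookkeeping.
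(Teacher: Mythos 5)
Your route is the paper's: start from Lemma~\ref{lem:delta_count} with $y \ge \sqrt{3}/2$ so that $4y^2 - 1 \ge 2$, then replace $\sigma_1(\tilde f)/\tilde f$ and $\sigma_0(\tilde f)$ by functions of $|\Delta|$ alone and multiply out. The paper does the replacement by quoting Lemma~2.8 of \cite{bhk}, which records $\sigma_0(\tilde f)\le|\Delta|^{0.192}\le|\Delta|^{1/4}$ and $\sigma_1(\tilde f)/\tilde f\le 1.842\log\log(|\Delta|^{1/2})$ for $|\Delta|\ge 10^{14}$; you re-derive those inputs from Robin's explicit inequalities, which is the same content packaged a little differently rather than a genuinely different route.

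Two details in your write-up need correcting. First, the explicit divisor-function inequality you quote, $\sigma_0(n)\le n^{1.066\log 2/\log\log n}$, is false: at $n=60$ the right-hand side is about $8.5$ while $\sigma_0(60)=12$. The Nicolas--Robin bound is $\sigma_0(n)\le n^{1.066/\log\log n}$ (equivalently $n^{1.5379\log 2/\log\log n}$), without the spurious factor of $\log 2$ in the exponent; with that correction the rest of your $\sigma_0$ argument goes through. Second, for the $\sigma_1$ term the error contribution $c/\log\log\tilde f$ in Robin's bound is not controlled merely by excluding $\tilde f\le 2$: for moderate $\tilde f$ (say $3\le \tilde f\le 100$) the denominator $\log\log\tilde f$ is close to $0$ and Robin's bound is far larger than the target $2\log\log(|\Delta|^{1/2})$. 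You must handle that range separately, which works only because $\sigma_1(n)/n$ is genuinely small there (its maximum for $n\le 100$ is $\sigma_1(60)/60 = 2.8$, well below $2\log\log(10^{7})\approx 5.56$). Both points are routine to patch and do not change the strategy, which is the one the paper uses.
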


\begin{proof}
	For $|\Delta| \ge 10^{14}$ we can find the following results as Lemma 2.8 in  \cite{bhk}
	\begin{align*}
		\sigma_0(\tilde f) &\le |\Delta|^{0.192} \le |\Delta|^{1/4}	\\
		\sigma_1(\tilde f)/\tilde{f} &\le 1.842\log\log(|\Delta|^{1/2}).
	\end{align*}
	Moreover, we have $y \ge \sqrt{3}/2$ and thus $4y^2 - 1 \ge 2$. Hence
	\begin{displaymath}
		\frac{\sigma_1\left(\tilde f\right)}{\tilde f} \frac{|\Delta|^{1/2}}{4y^2 - 1} \varepsilon^2
		\le |\Delta|^{1/2} \varepsilon^2 \log\log(|\Delta|^{1/2})
	\end{displaymath}
	and
	\begin{align*}
		8 \sigma_0\left(\tilde f\right) \left\vert\frac{\Delta}3\right\vert^{1/4}\varepsilon
			+ 8\frac{|\Delta|^{1/2}}{4y^2 - 1} \varepsilon
		&\le \frac 8{3^{1/4}}\left\vert\Delta\right\vert^{1/2}\varepsilon
			+ 4|\Delta|^{1/2} \varepsilon	\\
		&\le 7\left\vert\Delta\right\vert^{1/2}\varepsilon
			+ 4|\Delta|^{1/2} \varepsilon,
	\end{align*}
	which gives the claimed statement.
\end{proof}

\section{Height bounds}

From now on $\alpha$ will be the $j$--invariant of an elliptic curve without complex multiplication.
As a preparation we will start with some properties of the $j$--function.

\begin{lem}\label{lem:mono}
	The function $j(1/2 +iy)$ as a function of $y$ on the interval $[\sqrt{3}/2, \infty)$ is real
	and decreasing.
	The function $j(e^{i\theta})$ on the interval $[\pi/3,\pi/2]$ is real and increasing,
	and we have $j(e^{i\pi/2}) = j(i) = 1728$.
	The function $j(iy)$ on the interval $[1, \infty)$ is real and increasing.
\end{lem}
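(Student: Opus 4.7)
The plan is to prove reality and monotonicity separately on each of the three arcs. For reality I would use the identity $\overline{j(\tau)} = j(-\bar\tau)$, which follows from the Fourier expansion $j(\tau) = q^{-1} + 744 + \sum_{n \ge 1} c_n q^n$ with $q = e^{2\pi i\tau}$ having integer coefficients. For each arc one checks that $-\bar\tau$ is $\SL_2(\Z)$-equivalent to $\tau$: on $\tau = iy$ one has $-\bar\tau = \tau$; on $\tau = 1/2 + iy$ one has $-\bar\tau = \tau - 1$; on $|\tau| = 1$ one has $-\bar\tau = -1/\tau$. Applying $j$ and using its $\SL_2(\Z)$-invariance gives $\overline{j(\tau)} = j(\tau)$ in each case, so $j$ takes real values along each arc.

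For monotonicity I would invoke the standard fact that $j$ has a triple zero at $\rho = e^{2\pi i/3}$, that $j - 1728$ has a double zero at $i$, and that $j$ is locally biholomorphic elsewhere on $\H$; equivalently, $j'$ vanishes on $\H$ only at the $\SL_2(\Z)$-translates of $i$ and $\rho$. Parametrising each arc by a real variable $t$ and differentiating, $\tfrac{d}{dt} j(\tau(t)) = \tau'(t)\, j'(\tau(t))$ is real-valued (by reality of $j$ along the arc), continuous, and vanishes on the interval only at the endpoints, which sit precisely at the elliptic points $\rho$, $e^{i\pi/3} = \rho + 1$, or $i$. Hence this derivative has constant sign on the open interval, and $j$ is strictly monotone on each closed interval.

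The direction of monotonicity and the boundary values are then pinned down using $j(\rho) = 0$, $j(i) = 1728$, and the leading $q^{-1}$ term of the $q$-expansion. For $\tau = 1/2 + iy$, the endpoint $y = \sqrt{3}/2$ gives $\tau = \rho + 1$ and $j = 0$, while $q = -e^{-2\pi y}$ implies $j \sim q^{-1} \to -\infty$, so $j$ is decreasing. For $\tau = e^{i\theta}$ with $\theta \in [\pi/3, \pi/2]$, the endpoints are $0$ and $1728$, so $j$ is increasing. For $\tau = iy$ with $y \ge 1$, the left value is $j(i) = 1728$ and $q = e^{-2\pi y} > 0$ yields $j \to +\infty$, so $j$ is increasing.

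The only genuinely non-routine input is pinning down the zeros of $j'$ on $\H$, which I expect to be the main obstacle. It is a classical consequence of the valence formula for modular forms, so in practice the difficulty lies mainly in citing (or briefly sketching) this fact; the remainder of the argument is symmetry and boundary-value bookkeeping.
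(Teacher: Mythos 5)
Your proof is correct, and it diverges from the paper's on both halves of the argument. For reality, the paper only uses the conjugation identity $\overline{j(\tau)} = j(-\bar\tau)$ on the unit-circle arc (where $-\bar\tau = -1/\tau$); on the two vertical arcs it argues more directly that $q = \pm e^{-2\pi y}$ is real, so the $q$-series with integer coefficients is manifestly real. You instead run the conjugation argument uniformly on all three arcs, which is a clean consolidation. For monotonicity, the paper sidesteps $j'$ entirely: it observes that $y \mapsto j(1/2+iy)$ (resp.\ $y \mapsto j(iy)$, $\theta \mapsto j(e^{i\theta})$) is continuous and injective because $j$ is injective on the fundamental domain, hence strictly monotone, and then determines the direction from two explicit CM values per arc quoted from Cox ($j(1/2+\sqrt{-7}/2) = -15^3$, $j(i\sqrt 2) = 20^3$). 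You instead invoke the ramification structure of $j$ (that $j'$ vanishes on $\H$ only at $\SL_2(\Z)$-translates of $i$ and $\rho$, a consequence of the valence formula) to conclude that the real-valued derivative along each arc is nonvanishing on the open interval and hence of constant sign, with the direction pinned down by endpoint values and the $q^{-1}$ asymptotics. Your route trades away the two table lookups for the slightly heavier modular-forms input; the paper's route is more elementary given that injectivity of $j$ on $\F$ is essentially assumed throughout the paper anyway. Both are complete and correct.
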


\begin{proof}
	Recall $q = e^{2\pi i \tau}$. For $\tau = \frac 12 + iy$ with $y \ge \sqrt{3}/2$ we have
	$q = e^{\pi i} e^{-2\pi y} = -e^{-2\pi y}$. Thus $j(\tau)$ is real since all
	non--zero coefficients of $j$ are positive integers.
	See for example \cite{lehmer1942properties} for details.
	We have $j(1/2 + i\sqrt{3}/2) = 0$
	and from page 227 of \cite{cox2011primes} we know $j(1/2 + \sqrt{-7}/2) = -15^3$.
	But the map $y \mapsto j(1/2 + iy)$ is continuous and injective
	because $j$ is continuous and injective as a function
	on $\F$. Thus, it is monotonically decreasing.

	Similarly, if $\tau = iy$ with $y \ge 1$, then $q = e^{-2\pi y}$. We know $j(i) = 1728 = 12^3$
	and again from page 227 of \cite{cox2011primes} we know $j(i\sqrt{2}) = 20^3$.
	The same argument as before shows the claim for the map $y \mapsto j(iy)$.

	It remains to show that $j(e^{i\theta})$ is real because in that case $j(e^{i\pi/3}) = 0$
	and $j(e^{i\pi/2}) = 1728$ imply the monotonicity.
	Write $\tau = e^{i \theta}$.
	We have $\bar{q} = e^{2 \pi i (-\bar\tau)}$ and
	\begin{displaymath}
		\overline{j(\tau)} = (\bar{q})^{-1} + \sum_{n=0}^\infty c_n (\bar{q})^n = j(-\bar\tau).
	\end{displaymath}
	But $j$ is $\sl2z$--invariant so that $\overline{j(\tau)} = j(-\bar\tau) = j(\tau)$ since
	$|\tau| = 1$. Therefore, $j(\tau)$ must be real. This completes the proof.
\end{proof}
The next two statements tell us something about the growth of $j(\tau)$ as $|\tau|$ goes to infinity.
\begin{prop}\label{prop:jgrowth}
	If $\tau$ is in $\F$, then $\left||j(\tau)| - e^{2\pi \Im(\tau)}\right| \le 2079$.
\end{prop}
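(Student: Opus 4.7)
The plan is to exploit the classical $q$--expansion
\[
j(\tau) = q^{-1} + 744 + \sum_{n \ge 1} c_n q^n,
\]
where $q = e^{2\pi i \tau}$ and all Fourier coefficients $c_n$ are positive integers (for instance $c_1 = 196884$, $c_2 = 21493760$, \dots). Since $|q^{-1}| = e^{2\pi \Im(\tau)}$, the reverse triangle inequality gives
\[
\left| |j(\tau)| - e^{2\pi\Im(\tau)} \right| \le |j(\tau) - q^{-1}| = \left|744 + \sum_{n \ge 1} c_n q^n\right| \le 744 + \sum_{n \ge 1} c_n |q|^n,
\]
reducing the problem to bounding the positive series on the right.

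For $\tau \in \F$ we have $\Im(\tau) \ge \sqrt{3}/2$, so $|q| \le e^{-\pi\sqrt{3}}$. Because the series $r \mapsto \sum_n c_n r^n$ has nonnegative coefficients, it is monotone in $r$, and the desired estimate becomes the numerical inequality
\[
744 + \sum_{n \ge 1} c_n e^{-\pi\sqrt{3}\, n} \le 2079.
\]

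To establish this I would split the sum at a small cutoff $N$: since $e^{-\pi\sqrt{3}} \approx 0.0043$, only a handful of terms contribute substantially. One computes the head $\sum_{n \le N} c_n e^{-\pi\sqrt{3}\, n}$ directly from the tabulated values of the $c_n$, and bounds the tail $\sum_{n > N} c_n e^{-\pi\sqrt{3}\, n}$ by a standard estimate on the Fourier coefficients such as $c_n \le e^{4\pi\sqrt{n}}$ (Mahler, or the Hardy--Ramanujan--Rademacher formula); once $n$ is moderate, the factor $e^{-\pi\sqrt{3}\, n}$ dominates $e^{4\pi\sqrt{n}}$ and the tail shrinks rapidly.

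The main obstacle is numerical tightness: the constant $2079$ appears essentially optimal, as a direct numerical evaluation of the series at $|q| = e^{-\pi\sqrt{3}}$ already produces a value close to $2073$. Applying a crude bound like $c_n \le e^{4\pi\sqrt{n}}$ from $n = 1$ onward would inflate the final constant well past $2079$, so one must use the exact values of $c_1, \dots, c_N$ for some modest $N$ (say $N = 4$ or $5$) and invoke the asymptotic bound only on the genuine tail.
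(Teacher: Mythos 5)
Your overall plan — expand $j$ in $q$, use the reverse triangle inequality, and bound the tail $744 + \sum_{n\ge 1} c_n |q|^n$ for $|q|\le e^{-\pi\sqrt 3}$ — is the standard attack and is sound in outline. But two points need attention.

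First, your numerical claim is off in a way that matters. The quantity $744 + \sum_{n\ge 1} c_n e^{-\pi\sqrt 3\, n}$ is not ``close to $2073$''; a careful evaluation gives roughly
\[
744 + 853.2 + 403.6 + 70.3 + 7.1 + 0.51 + 0.028 + \cdots \approx 2078.8,
\]
leaving only about $0.2$ of headroom under $2079$. (As a sanity check: this matches $j(2i/\sqrt3) - e^{\pi\sqrt3}$, and one also recovers $e^{\pi\sqrt3}\approx 230.8$ from $j(\zeta)=0$.) With so little slack, your proposed cutoff ``$N=4$ or $5$'' is dangerous: at $N=4$ the crude bound $c_n\le e^{4\pi\sqrt n}$ already gives $e^{\pi(4\sqrt5-5\sqrt3)}\approx 2.4$ for the $n=5$ term alone, overshooting $2079$. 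Taking $N=5$ works only by a whisker ($\approx 0.06$), and $N=6$ is the first genuinely safe choice. So the plan survives, but the margin is razor-thin and the head must be computed with real care; this is not a ``comfortable'' numerical inequality.

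Second, on method: the paper does not give its own proof of this proposition — it cites Lemma~1 of \cite{bilumasserzannier}. However, the structurally identical Lemma~\ref{lem:boundj} in this paper is proved by a slicker device that avoids tail estimates entirely: since the coefficients $c_n$ are nonnegative, one observes $\sum_{n\ge 0} c_n q_0^n = j(\tau_0) - q_0^{-1}$ for a purely imaginary $\tau_0$ with $e^{2\pi i\tau_0}=q_0$, and then evaluates $j(\tau_0)$ in closed form via $\sl2z$--invariance (there, $j(i/2)=j(2i)=66^3$). That trick does not directly yield $2079$ here because the relevant point, $j(i\sqrt3/2)=j(2i/\sqrt3)$, is a quadratic irrational (CM point of discriminant $-48$), not a nice integer. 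So your head-plus-tail computation is a legitimate and essentially unavoidable alternative for this particular constant, just one that must be executed more tightly than you anticipated.
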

This result can be found in Lemma 1 of \cite{bilumasserzannier}.
We are going to prove the following result, which is of similar nature.

\begin{lem}\label{lem:boundj}
	Let $\tau$ be complex with $\Im(\tau) \ge 1/2$. Then
	\begin{displaymath}
		\left\vert \left\vert j(\tau)\right\vert - e^{2\pi\Im(\tau)} \right\vert \le 287473.
	\end{displaymath}
\end{lem}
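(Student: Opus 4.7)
The plan is to bound $|j(\tau)-q^{-1}|$ by exploiting the positivity of the coefficients in the $q$-expansion of $j$, and to evaluate the resulting geometric-type series explicitly via a classical singular modulus.

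Concretely, I would start from the standard expansion $j(\tau) = q^{-1} + 744 + \sum_{n\ge 1} c_n q^n$, where $q = e^{2\pi i \tau}$ and the $c_n$ are positive integers (Lehmer \cite{lehmer1942properties}, already cited in the proof of Lemma \ref{lem:mono}). The hypothesis $\Im(\tau)\ge 1/2$ gives $|q|\le e^{-\pi}$, so the reverse triangle inequality combined with $c_n \ge 0$ yields
\begin{displaymath}
	\bigl||j(\tau)| - e^{2\pi\Im(\tau)}\bigr| \;\le\; |j(\tau)-q^{-1}| \;\le\; 744 + \sum_{n\ge 1} c_n |q|^n \;\le\; 744 + \sum_{n\ge 1} c_n e^{-\pi n}.
\end{displaymath}

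The heart of the proof is to recognise this last quantity as a known value. Specialising the $q$-expansion to $\tau_0 = i/2$, where $q_0 = e^{-\pi}$ is real and positive, gives
\begin{displaymath}
	j(i/2) = e^{\pi} + 744 + \sum_{n\ge 1} c_n e^{-\pi n}.
\end{displaymath}
By $\SL_2(\Z)$-invariance, $j(i/2) = j(-1/(i/2)) = j(2i)$, and $j(2i) = 66^3 = 287496$ is the classical CM value attached to the order of discriminant $-16$ (see e.g.~\cite{cox2011primes}). Rearranging gives $744 + \sum_{n\ge 1} c_n e^{-\pi n} = 287496 - e^{\pi}$, and since $e^{\pi} > 23$ this is strictly less than $287473$, proving the lemma.

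The only real obstacle is verifying (or citing) the exact value $j(2i)=287496$; the rest is just two applications of the triangle inequality and one use of modular invariance. Note that the bound is essentially sharp: it is attained, up to rounding of $e^{\pi}$, precisely at $\tau = i/2$, which is why $287473 = \lceil 287496 - e^{\pi}\rceil$ appears in the statement rather than a rounder number.
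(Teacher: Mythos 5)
Your proof is correct and follows essentially the same route as the paper: exploit the non-negativity of the $q$-expansion coefficients to reduce the bound to the tail $\sum_{n\ge 0} c_n e^{-\pi n}$, then evaluate it exactly as $j(i/2)-e^{\pi}=j(2i)-e^{\pi}=66^3-e^{\pi}$ via $\SL_2(\Z)$-invariance. The only cosmetic difference is that you write $c_0=744$ explicitly while the paper leaves it inside the sum.
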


\begin{proof}
	We have $j(\tau) = q^{-1} + c_0 + c_1 q + \cdots$, where as usual $q = e^{2\pi i \tau}$.
	Recall that the coefficients of the $q$--expansion of $j$ are all non--negative integers.
	Then
	$\left\vert|j|-|q^{-1}|\right\vert \le \sum_{n=0}^\infty c_n |q|^n \le \sum_{n=0}^\infty c_n q_0^n$ with
	$q_0 = e^{2\pi i \tau_0} = e^{-\pi}$ and $\tau_0 = i/2$. The right--hand side of the inequality
	is equal to $j(\tau_0) - q_0^{-1} = 66^3 - e^\pi \le 287473$.
	Note that we have used $j(\tau_0) = j(-1/\tau_0)$ since the $j$--function is $\sl2z$--invariant, and
	that $j(-1/\tau_0) = j(2i) = 66^3$ by Table 12.20 in \cite{cox2011primes}.
\end{proof}

In Lemma \ref{lem:mono} we proved that the $j$--function is real on the vertical and unit
circle geodesics of the fundamental domain and on the imaginary axis.
We can even say that the $j$--function is not real outside of this set, as the following statement shows.

\begin{cor}\label{cor:imag_j}
	If $\tau \in \F$ with $\Re(\tau) \not= 0, \pm \frac 12$ and $|\tau| > 1$, then $\Im(j(\tau)) \not= 0$.
	Moreover, $\Im(j(\tau)) < 0$ for $0 < \Re(\tau) < 1/2$ and $\Im(j(\tau)) > 0$ for $-1/2 < \Re(\tau) < 0$.
\end{cor}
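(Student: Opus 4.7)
The plan is to separate the corollary into two parts: the nonvanishing of $\Im(j(\tau))$, and the determination of its sign. Both build on the identity $\overline{j(\tau)} = j(-\bar\tau)$ that was already derived in the proof of Lemma~\ref{lem:mono} from the fact that $j$ has real Fourier coefficients.

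For the nonvanishing, I would argue as follows. Under the hypotheses $|\tau|>1$ and $\Re(\tau)\notin\{0,\pm 1/2\}$, the point $\tau$ lies in the topological interior of $\F$. Since $\F$ is symmetric under $\tau\mapsto -\bar\tau$, the point $-\bar\tau$ also lies in the interior of $\F$, and it is distinct from $\tau$ because $\Re(\tau)\neq 0$. Injectivity of $j$ on the interior of $\F$ then forces $j(\tau)\neq j(-\bar\tau)=\overline{j(\tau)}$, so $\Im(j(\tau))\neq 0$.

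For the sign I would use a connectedness argument. The open set $R^+=\{\tau\in\H : 0<\Re(\tau)<1/2,\ |\tau|>1\}$ is path-connected: any two of its points can be joined by moving first up to a common large imaginary part and then horizontally. The function $\Im\circ j$ is continuous on $R^+$ and, by the first part, nowhere zero, so it has constant sign on $R^+$. To pin that sign down, I would test at $\tau_0=1/4+iT$ for large $T$: the same estimate used in the proof of Lemma~\ref{lem:boundj} gives $|j(\tau_0)-q^{-1}|\le\sum_{n\ge 0}c_n|q|^n\le j(i/2)-e^{\pi}\le 287473$, while $q^{-1}=e^{2\pi T}e^{-i\pi/2}=-ie^{2\pi T}$, so $\Im(j(\tau_0))\le -e^{2\pi T}+287473<0$ once $T$ is large enough. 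Hence $\Im(j(\tau))<0$ throughout $R^+$.

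The analogous assertion on the left half $R^-=\{\tau:-1/2<\Re(\tau)<0,\ |\tau|>1\}$ then follows immediately from the conjugation relation: if $\tau\in R^-$ then $-\bar\tau\in R^+$, and $\Im(j(\tau))=-\Im(j(-\bar\tau))>0$. No step presents a real obstacle; the conceptual content is carried entirely by the injectivity of $j$ on the interior of $\F$ together with one explicit sign check at large imaginary part, and the bookkeeping for that check is already available from Lemma~\ref{lem:boundj}.
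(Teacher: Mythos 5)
Your proof is correct, and it differs from the paper's in both halves in a way worth noting. For the nonvanishing, the paper argues by cases on the real value $R=j(\tau)$: for $0\le R\le 1728$ it applies the intermediate value theorem to $t\mapsto j(e^{it})$, for $R\ge 1728$ it applies it to $t\mapsto j(it)$ after invoking Proposition~\ref{prop:jgrowth}, and it handles $R<0$ similarly. Your observation that $\tau$ and $-\bar\tau$ are two \emph{distinct} points in the interior of $\F$, so injectivity of $j$ on $\F$ directly rules out $j(\tau)=j(-\bar\tau)=\overline{j(\tau)}$, replaces all three cases with a single symmetry argument; it is shorter and uses no growth estimate. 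For the sign determination, both you and the paper use the same connectedness-plus-nonvanishing argument to reduce to a single test point; the paper then cites an explicit radical formula for $j\bigl(\tfrac{1+5i}{4}\bigr)$ and verifies $\Im<0$ by a Sage computation, whereas you instead look at $\tau_0=\tfrac14+iT$ with $T$ large, where $q^{-1}=-ie^{2\pi T}$ has large negative imaginary part and the tail of the $q$-expansion is bounded by the estimate already established inside the proof of Lemma~\ref{lem:boundj}. Your route is self-contained and avoids both the external reference and the computer check, which is a genuine improvement in rigor; the paper's concrete test point is arguably more illustrative. Your handling of $R^-$ via $\Im(j(\tau))=-\Im(j(-\bar\tau))$ is likewise more direct than the paper's appeal to surjectivity of $j$ on $\F$.
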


\begin{proof}
	The proof is just an application of the intermediate value theorem.
	We use that $j$ is injective on $\F$. Assume $j(\tau) = R$ is real with $|\tau| > 1$ and $-1/2 < \Re(\tau) < 0$
	or $0 < \Re(\tau) < 1/2$.
	If $0 \le R \le 1728$, then $j(e^{i\theta}) = R$ for some $\pi/3 \le \theta \le \pi/2$ by the intermediate value
	theorem applied to the real function $t \mapsto j(e^{it})$. This is a contradiction to the injectivity
	of $j$ on $\F$ since $|\tau| > 1$.

	Assume $R \ge 1728$. By Lemma \ref{lem:mono} $j(iR) \ge 1728$ and applying Proposition \ref{prop:jgrowth}
	we have $j(iR) \ge e^{2\pi R} - 2079$. Thus $j(iR) \ge R$ and applying the intermediate value theorem again
	gives a $t \ge 1$ with $j(it) = R$. This is a contradiction since $0 < \tau < 1/2$.
	The case when $R<0$ follows similarly.

	To show $\Im(j(\tau)) < 0$ for $0 < \Re(\tau) < 1/2$ and $|\tau| > 1$ we assume we have
	$\tau_0, \tau_1$ in the interior of the fundamental domain $\F^\circ$ with positive real part
	and such that $\Im(j(\tau_0)) < 0$ and $\Im(j(\tau_1)) > 0$.
	Choose a path in $\F^\circ$, parametrized by $\gamma\colon [0,1] \rightarrow \F$, such that
	$\gamma(0) = \tau_0$, $\gamma(1) = \tau_1$ and such that every point in $\gamma([0,1])$
	is in the interior of $\F$ and has positive real part.
	The function $t \mapsto \Im(j(\gamma(t)))$ is continuous and satisfies
	$\Im(j(\gamma(0))) = \Im(j(\tau_0)) < 0$ and $\Im(j(\gamma(1))) = \Im(j(\tau_1)) > 0$.
	By the intermediate value theorem we have a $0 < t < 1$ with $\Im(j(\gamma(t))) = 0$
	which is impossible by the choice of $\gamma$ and the first claim of the corollary.
	So it suffices to give a value of $j(\tau)$ with $0 < \Re(\tau) < 1/2$ and $\Im(j(\tau)) < 0$.
	We have
	\begin{displaymath}
		j\left(\frac{1+5i}4\right) = -1728\left(\sqrt{5}-2\right)^{20}\left(3-2\sqrt{\sqrt{5}}i\right)^6
		\left(238\sqrt{5} + \frac{861}2 + 60\sqrt{\sqrt{5}}i\right)^3
	\end{displaymath}
	by page 17 of \cite{adlaj2014multiplication}.
	A computation with Sage shows $\Im\left(j\left(\frac{1+5i}4\right)\right) < 0$.
	We must have $\Im(j(\tau)) > 0$ for $-1/2 < \Re(\tau) < 0$ by the same argument and
	the fact that $j\colon \F \rightarrow \C$ is surjective.
	This completes the proof.
\end{proof}

The following two lemmas for $h(j)$ can be found in \cite{bhk}. The proofs follow directly from the statements in that
very paper with the inequality $h(j-\alpha) \ge h(j) - h(\alpha) - \log 2$.
For details see Proposition 4.1 and Proposition 4.3 in \cite{bhk}.
\begin{lem}\label{lem:lower_trivial}
	We have $[\Q(j):\Q] = \cm(\Delta)$, and if $|\Delta| \ge 16$, then
	\begin{displaymath}
		h(j-\alpha) \ge \frac{\pi|\Delta|^{1/2}-0.01}{\cm(\Delta)} - h(\alpha) - \log 2.
	\end{displaymath}
\end{lem}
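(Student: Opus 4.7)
The statement $[\Q(j):\Q] = \cm(\Delta)$ is the classical fact that the ring class polynomial of conductor $f$ over the order of discriminant $\Delta$ in $\Q(\sqrt{\Delta})$ has degree equal to the class number, and its roots are precisely the Galois conjugates of $j$. I would simply cite this (e.g.\ Cox, \emph{Primes of the form $x^2+ny^2$}).

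For the inequality, my plan is to reduce to a lower bound on $h(j)$ via the standard subadditivity of the height. Applied to $j = (j-\alpha) + \alpha$ the inequality $h(x+y) \le h(x) + h(y) + \log 2$ gives $h(j) \le h(j-\alpha) + h(\alpha) + \log 2$, hence
\begin{displaymath}
	h(j-\alpha) \ge h(j) - h(\alpha) - \log 2.
\end{displaymath}
So it suffices to prove $h(j) \ge (\pi|\Delta|^{1/2}-0.01)/\cm(\Delta)$ for $|\Delta|\ge 16$.

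The Galois conjugates of $j$ are exactly the values $j(\tau_Q)$ as $Q=(a,b,c)$ runs over $\Quads(\Delta)$, with $\tau_Q = (-b+\sqrt{\Delta})/(2a) \in \F$. Since $j$ is an algebraic integer, we can write
\begin{displaymath}
	h(j) = \frac{1}{\cm(\Delta)} \sum_{Q \in \Quads(\Delta)} \log\max\{1,|j(\tau_Q)|\},
\end{displaymath}
and every summand is nonnegative. The principal form (namely $(1,0,-\Delta/4)$ or $(1,1,(1-\Delta)/4)$ according to the residue of $\Delta$ modulo $4$) always lies in $\Quads(\Delta)$, and for the corresponding $\tau$ we have $\Im(\tau) = |\Delta|^{1/2}/2$. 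Proposition~\ref{prop:jgrowth} then yields
\begin{displaymath}
	|j(\tau)| \ge e^{\pi|\Delta|^{1/2}} - 2079,
\end{displaymath}
so
\begin{displaymath}
	\log|j(\tau)| \ge \pi|\Delta|^{1/2} + \log\bigl(1 - 2079\, e^{-\pi|\Delta|^{1/2}}\bigr).
\end{displaymath}

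The remaining step is numerical: for $|\Delta| \ge 16$ one has $\pi|\Delta|^{1/2} \ge 4\pi$, so $2079\, e^{-\pi|\Delta|^{1/2}}$ is small enough that the logarithmic correction is at least $-0.01$. Plugging this single term into the sum (and discarding all others, which are $\ge 0$) gives $h(j)\ge (\pi|\Delta|^{1/2}-0.01)/\cm(\Delta)$, and combining with the subadditivity inequality above yields the claim. The only delicate point is checking the explicit constant $0.01$; otherwise the argument is routine, and indeed the authors note that it follows directly from Propositions~4.1 and~4.3 of \cite{bhk} together with $h(j-\alpha) \ge h(j) - h(\alpha) - \log 2$.
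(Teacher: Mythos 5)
Your proposal is correct and takes essentially the same approach as the paper: the paper reduces to the lower bound $h(j)\ge(\pi|\Delta|^{1/2}-0.01)/\cm(\Delta)$ via $h(j)\le h(j-\alpha)+h(\alpha)+\log 2$ and then cites Proposition~4.1 of \cite{bhk} for that lower bound, whereas you additionally unpack that cited proposition (principal form with $\Im(\tau)=|\Delta|^{1/2}/2$, Proposition~\ref{prop:jgrowth}, integrality of $j$, and the numerical check that $|\Delta|\ge 16$ makes the error term at most $0.01$). The unpacked argument is exactly the standard proof of the cited result, so there is no substantive difference.
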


\begin{proof}
	The first statement is a classical result, see for example Chapter 13 in \cite{cox2011primes}.
	The rest follows from Proposition 4.1 in \cite{bhk} and
	$h(j) = h(j-\alpha+\alpha) \le h(j-\alpha) + h(\alpha) + \log 2$.
\end{proof}

\begin{lem}\label{lem:lower_hard}
	We have
	\begin{displaymath}
		h(j-\alpha) \ge \frac 3{\sqrt 5} \log|\Delta| - 9.79 - h(\alpha) - \log 2.
	\end{displaymath}
\end{lem}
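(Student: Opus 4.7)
The plan is to mirror exactly the short argument used for Lemma \ref{lem:lower_trivial}, but with a stronger input on $h(j)$. The point is that all the real work is done in \cite{bhk}; the current lemma is just a one–line consequence.

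First I would invoke the relevant proposition from \cite{bhk} (the analogue of Proposition 4.3 cited by the author), which gives a lower bound of the shape
\begin{displaymath}
    h(j) \ge \frac{3}{\sqrt 5}\log|\Delta| - 9.79.
\end{displaymath}
This is the deep ingredient: it ultimately rests on Colmez's formula for the Faltings height of a CM elliptic curve together with the Nakkajima--Taguchi refinement, as already highlighted in the introduction of the paper. I would not reprove it; I would just quote it with the correct constants (verifying that the explicit $9.79$ matches the statement in \cite{bhk}).

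Next I would use the elementary height inequality
\begin{displaymath}
    h(x+y) \le h(x) + h(y) + \log 2,
\end{displaymath}
applied to $x = j-\alpha$ and $y = \alpha$. This yields
\begin{displaymath}
    h(j) = h((j-\alpha)+\alpha) \le h(j-\alpha) + h(\alpha) + \log 2,
\end{displaymath}
which rearranges to $h(j-\alpha) \ge h(j) - h(\alpha) - \log 2$. Substituting the lower bound for $h(j)$ above gives exactly
\begin{displaymath}
    h(j-\alpha) \ge \frac{3}{\sqrt 5}\log|\Delta| - 9.79 - h(\alpha) - \log 2,
\end{displaymath}
as desired.

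There is no real obstacle here beyond citing the right result; the only thing to double–check is that the numerical constants ($3/\sqrt 5$ and $9.79$) in \cite{bhk} do hold under the hypotheses we have (in particular that no extra restriction on $|\Delta|$ is needed for this form of the bound, or that any such restriction is subsumed by the regime $|\Delta|\ge 10^{14}$ used later). If \cite{bhk} states the inequality only for $|\Delta|$ above some explicit threshold, I would note that threshold in the statement or fold it into the running assumption on $|\Delta|$.
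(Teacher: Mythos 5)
Your proposal matches the paper's intended argument exactly: the paper states before Lemma \ref{lem:lower_trivial} that both lower bounds follow directly from Propositions 4.1 and 4.3 of \cite{bhk} combined with $h(j-\alpha) \ge h(j) - h(\alpha) - \log 2$, and the paper gives no further details for Lemma \ref{lem:lower_hard} beyond citing that this is the deep Colmez/Nakkajima--Taguchi input. Your caveat about checking the validity range of the constants in \cite{bhk} is a reasonable point of diligence, but there is no substantive difference from what the paper does.
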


This lemma is more delicate and follows from work of Colmez \cite{colmez}
and Nakkajima--Taguchi \cite{nakkajimataguchi}.
The two previous lemmas bound the height of $j-\alpha$ from below.
Next we want to bound the height of $j-\alpha$ from above when $j-\alpha$ is an algebraic unit.

The following lemma says that if two points in the fundamental domain are close together, then the
difference of the images under the $j$--function can be bound from below in terms of the difference
of the points. Recall that $\zeta = e^{2\pi i/6}$.

\begin{lem}\label{lem:lin_log}
	Let $\zeta, \zeta^2 \not= \xi \in \bar\F$. Put $B = 4\cdot 10^5\max\{1,|j(\xi)|\}$
	and $A = |j''(i)|$ in the case when $\xi = i$ and $A = |j'(\xi)|$ otherwise.
	For $|\tau - \xi| \le \frac{A}{12A + 108B} \le \frac 13$
	we have
	\begin{displaymath}
		|j(\tau) - j(\xi)|
		\ge \frac{A}4 |\tau - \xi|^2.
	\end{displaymath}
	If $\xi \not= i$ we even have
	\begin{displaymath}
		|j(\tau) - j(\xi)| \ge \frac{A}2 |\tau - \xi|
	\end{displaymath}
	for $|\tau - \xi| \le \frac{A}{6A + 18B}$.
\end{lem}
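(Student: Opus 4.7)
The plan is to Taylor-expand $j$ at $\xi$ and bound the tail of the expansion by Cauchy's estimates. First I would verify that $|j(z)| \le B$ on the closed disk $\overline{D} = \{z : |z - \xi| \le 1/3\}$. Since $\xi \in \bar\F$ gives $\Im(\xi) \ge \sqrt{3}/2$, every $z \in \overline{D}$ satisfies $\Im(z) \ge \sqrt{3}/2 - 1/3 > 1/2$, so Lemma \ref{lem:boundj} applies and yields
\begin{displaymath}
    |j(z)| \le e^{2\pi \Im(z)} + 287473 \le e^{2\pi/3}\, e^{2\pi \Im(\xi)} + 287473.
\end{displaymath}
Combining this with Proposition \ref{prop:jgrowth}, which gives $e^{2\pi \Im(\xi)} \le |j(\xi)| + 2079$, and computing $e^{2\pi/3} \approx 8.13$, one checks numerically that $|j(z)| \le 4 \cdot 10^5 \max\{1,|j(\xi)|\} = B$ on $\overline{D}$.

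Next, Cauchy's integral formula on the circle $|z - \xi| = 1/3$ gives $|j^{(n)}(\xi)/n!| \le B \cdot 3^n$ for every $n \ge 0$. Writing $\delta = |\tau - \xi|$ and truncating the Taylor series at order $N$ yields the tail bound
\begin{displaymath}
    \left|\,\sum_{n \ge N} \frac{j^{(n)}(\xi)}{n!}(\tau - \xi)^n\,\right|
        \le \sum_{n \ge N} B (3\delta)^n = \frac{B(3\delta)^N}{1 - 3\delta},
\end{displaymath}
valid for $3\delta < 1$.

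For the first inequality, with $\xi = i$ we use $j'(i) = 0$, so the Taylor expansion starts at order two; applying the $N = 3$ tail bound gives
\begin{displaymath}
    |j(\tau) - j(i)| \ge \frac{A}{2}\delta^2 - \frac{27 B \delta^3}{1 - 3\delta}.
\end{displaymath}
A routine algebraic manipulation shows that the right-hand side is at least $\tfrac{A}{4}\delta^2$ exactly when $A(1 - 3\delta) \ge 108 B\delta$, i.e.\ $\delta \le \tfrac{A}{3A + 108B}$, and since $\tfrac{A}{12A + 108B} \le \tfrac{A}{3A + 108B}$ the hypothesized bound on $\delta$ suffices (with a safety factor that also makes the side condition $\delta \le 1/3$ automatic). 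For the second inequality, $\xi \ne i$ implies $j'(\xi) \ne 0$ (the excluded zeros of $j'$ in $\bar\F$ being $i, \zeta, \zeta^2$), so $A = |j'(\xi)|$ and the $N = 2$ tail bound gives
\begin{displaymath}
    |j(\tau) - j(\xi)| \ge A\delta - \frac{9 B \delta^2}{1 - 3\delta}.
\end{displaymath}
This is at least $\tfrac{A}{2}\delta$ when $A(1 - 3\delta) \ge 18 B\delta$, i.e.\ $\delta \le \tfrac{A}{3A + 18B}$, which again is implied by the stated $\delta \le \tfrac{A}{6A + 18B}$.

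The conceptual work is minimal; the only real obstacle is the bookkeeping in Step 1, verifying that the fixed numerical constant $4 \cdot 10^5$ is indeed large enough to dominate the contribution of $e^{2\pi/3}\cdot 2079 + 287473$ uniformly in $\xi \in \bar\F$. Everything else is Cauchy's estimate plus a geometric series.
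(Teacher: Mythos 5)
Your proof is correct and follows essentially the same strategy as the paper: bound $j$ on a disk of radius $1/3$ around $\xi$ via Lemma~\ref{lem:boundj} and Proposition~\ref{prop:jgrowth}, Taylor-expand, control the tail, and solve for the admissible radius. The only real difference is technical: the paper invokes Lemma~2.4 of Bilu--Luca--Pizarro-Madariaga as a black box, which delivers the packaged remainder estimate $|f(\tau) - f'(\xi)(\tau - \xi)| \le \frac{A/r + B}{r^2}|\tau - \xi|^2$ (a Schwarz-type argument applied to $f = j - j(\xi)$), whereas you re-derive an equivalent remainder bound from scratch via Cauchy's estimate $|j^{(n)}(\xi)|/n! \le B \cdot 3^n$ and summing the resulting geometric series. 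Both are standard, and your version is self-contained; in fact your thresholds $\delta \le \frac{A}{3A + 18B}$ and $\delta \le \frac{A}{3A + 108B}$ are a bit looser than the paper's $\frac{A}{6A+18B}$ and $\frac{A}{12A+108B}$, so the stated hypotheses are comfortably sufficient. The numerical check that $4\cdot 10^5$ dominates is the same in both and goes through with room to spare.
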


Note that we can write $j''(i) = -2\cdot 3^4 \Gamma(1/4)^8/\pi^4$ as
K\"uhne shows in the appendix of \cite{wustholz2014note}.
Since $\Gamma(1/4) = 3.6256\ldots > 3$ we could further estimate the first of the
two bounds in the lemma by
\begin{displaymath}
	|j(\tau) - 1728| \ge 12413|\tau - i|^2.
\end{displaymath}

\begin{proof}
	This is a special case of Lemma 2.4 in \cite{bilulucapizarro}. We take $f(\tau) = j(\tau) - j(\xi)$.
	Assume $|\tau - \xi| \le 1/3$. Then by Lemma \ref{lem:boundj}
	\begin{align*}
		|f(\tau)|  &\le |j(\tau)| + |j(\xi)| \le |j(\xi)| + e^{2\pi \Im(\tau)} + 287473	\\
		&\le |j(\xi)| + e^{2\pi(\Im(\xi) + 1/3)} + 287473.
	\end{align*}
	We have $e^{2\pi/3} < 9$ so applying Proposition \ref{prop:jgrowth} we obtain
	\begin{align*}
		|f(\tau)| &\le |j(\xi)| + e^{2\pi(\Im(\xi) + 1/3)} + 287473	\\
		&\le |j(\xi)| + 9|j(\xi)| + 9\cdot 2079 + 287473	\\
		&\le 10|j(\xi)| + 306184	\\
		&\le 4\cdot 10^5\max\{1,|j(\xi)|\}.
	\end{align*}
	We treat the two cases $\xi = i$ and $\xi \not= i$ separately and start with the latter.
	By \cite{bilulucapizarro} we have
	\begin{displaymath}
		|j(\tau) - j(\xi)| - A|\tau - \xi| \ge -\frac{A/3 + B}{(1/3)^2} |\tau - \xi|^2
		= -\left(3A + 9B\right) |\tau - \xi|^2,
	\end{displaymath}
	where $A = |j'(\xi)|$.
	In the smaller disc $|\tau - \xi| \le \frac{A}{2\cdot(3A + 9B)}$ we then obtain
	\begin{displaymath}
		|j(\tau) - j(\xi)| \ge \frac{A}{2} |\tau - \xi|.
	\end{displaymath}
	Now assume $\xi = i$ and put $A = |j''(i)|$. By \cite{bilulucapizarro} we conclude
	\begin{displaymath}
		|j(\tau) - j(\xi)| - \frac A2|\tau - \xi|^2 \ge -\frac{A/9 + B}{(1/3)^3} |\tau - \xi|^3
		= -\left(3A + 27B\right) |\tau - \xi|^3
	\end{displaymath}
	and thus
	\begin{displaymath}
		|j(\tau) - j(\xi)| \ge \frac{A}{4} |\tau - \xi|^2.
	\end{displaymath}
	in the smaller disc $|\tau - \xi| \le \frac{A}{4\cdot(3A + 27B)}$.
\end{proof}

We can also bound the $j$--invariants outside of a neighborhood of $\xi$ as the following lemma shows.
We write
\begin{displaymath}
	\F_+ = \{ \tau \in \F; 0 \le \Re(\tau) \le 1/2 \}
\end{displaymath}
and
\begin{displaymath}
	\F_- = \{ \tau \in \F; -1/2 \le \Re(\tau) \le 0 \}.
\end{displaymath}

\begin{lem}\label{lem:jboundbelow}
	Let $\zeta \not=\xi,\tau \in \F_+$ and put $A = |j''(i)|$ if $\xi = i$ and $A = |j'(\xi)|$ otherwise.
	Also define $B = 4\cdot10^5 \max\{1,|j(\xi)|\}$.
	Assume $|\tau - \xi| \ge \delta$,
	where $\delta$ is defined as the minimum of $\frac{A}{12A + 108B}$ and half the (euclidean)
	distance of $\xi$ to any geodesic of $\partial\F_+$ (i.e.~the vertical line
	segments with real part $0$ and $1/2$ and the part of the unit circle
	between those lines) not containing $\xi$.
	Then
	\begin{displaymath}
		|j(\tau) - j(\xi)| \ge c(\xi)
	\end{displaymath}
	where $c(\xi) > 0$ is a constant depending on $\xi$ and is given by
	\begin{displaymath}
		c(\xi) = \begin{cases}
			A\delta/2	&	\text{if } \xi \in \partial\F_+\setminus\{i\}	\\
			A\delta^2/4	&	\text{if } \xi = i	\\
			\min\left\{|\Im(j(\xi))|, A\delta/2 \right\}	&	\text{otherwise.} 	\\
		\end{cases}
	\end{displaymath}
\end{lem}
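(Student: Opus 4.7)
My plan is to combine the local lower bound from Lemma \ref{lem:lin_log} near $\xi$ with the minimum modulus principle applied to $f(\tau) := j(\tau) - j(\xi)$, which is holomorphic and non-vanishing on $\F_+ \setminus \{\xi\}$ by injectivity of $j$ on $\F$, together with the boundary behavior described in Lemma \ref{lem:mono} and Corollary \ref{cor:imag_j}.

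Set $R := \frac{A}{6A+18B}$ when $\xi \neq i$ and $R := \frac{A}{12A+108B}$ when $\xi = i$; note that $\delta \leq R$ in both cases. For $\tau$ with $\delta \leq |\tau - \xi| \leq R$, Lemma \ref{lem:lin_log} gives $|f(\tau)| \geq A|\tau-\xi|/2 \geq A\delta/2$ when $\xi \neq i$ and $|f(\tau)| \geq A|\tau-\xi|^2/4 \geq A\delta^2/4$ when $\xi = i$. For $\tau$ with $|\tau - \xi| > R$ I apply the minimum modulus principle to $f$ on the compact region $V_M := \{\tau \in \F_+ : \Im(\tau) \leq M\} \setminus B(\xi, R)$. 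Taking $M$ large, Proposition \ref{prop:jgrowth} forces $|f|$ to exceed the desired bound on the segment $\Im(\tau) = M$, and on the inner arc $|\tau - \xi| = R$ Lemma \ref{lem:lin_log} yields $|f| \geq AR/2$ (resp.\ $AR^2/4$), which dominates $c(\xi)$. Only the analysis of $|f|$ on $\partial\F_+ \cap V_M$ then remains.

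On $\partial\F_+$ the value $j(\tau)$ is real by Lemma \ref{lem:mono}. If $\xi$ lies in the interior of $\F_+$, then Corollary \ref{cor:imag_j} gives $j(\xi) \notin \R$, so $|f(\tau)| \geq |\Im(j(\xi))|$, and combined with the near-region bound this yields $\min\{|\Im(j(\xi))|, A\delta/2\}$. If instead $\xi \in \partial\F_+$, then $j(\xi)$ is real and I split further: for $\tau$ on the same boundary geodesic as $\xi$, the monotonicity of $j$ along that geodesic (Lemma \ref{lem:mono}) combined with the bound $|f(\tau_0)| \geq A\delta/2$ (resp.\ $A\delta^2/4$) from Lemma \ref{lem:lin_log} at the point $\tau_0$ on the geodesic with $|\tau_0 - \xi| = \delta$ gives $|f(\tau)| \geq |f(\tau_0)|$; and for $\tau$ on a different geodesic, $|\tau - \xi| \geq 2\delta$ by the definition of $\delta$, while the three intervals of $j$-values on the boundary geodesics from Lemma \ref{lem:mono} (namely $(-\infty,0]$, $[0,1728]$, $[1728,\infty)$) meet only at $j(\zeta) = 0$ and $j(i) = 1728$, so, since $\xi \neq \zeta$, the quantity $|j(\tau) - j(\xi)|$ is bounded below by the gap between $j(\xi)$ and the closest $j$-value on the other geodesic.

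The main obstacle is to make this last gap quantitatively exceed $A\delta/2$ (resp.\ $A\delta^2/4$), particularly when $\xi$ is close to $i$: there $j(\xi)$ is close to $1728$, the quantity $A = |j'(\xi)|$ is small because $j'(i) = 0$, and $\delta$ also shrinks. Resolving this requires the Taylor expansion of $j$ about $i$ using the explicit value $j''(i) = -2 \cdot 3^4 \Gamma(1/4)^8/\pi^4$ together with the dependence $B = 4 \cdot 10^5 \max\{1, |j(\xi)|\}$, which together absorb the vanishing of $j'$ at $i$ and ensure, e.g., $j(\xi) - 1728 \geq A\delta/2$ in the limiting regime.
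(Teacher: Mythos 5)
Your overall strategy coincides with the paper's: apply the minimum modulus principle to $f = j - j(\xi)$ on $\F_+$ with a disc around $\xi$ removed, handle the inner circle by Lemma \ref{lem:lin_log}, handle large imaginary part by Proposition \ref{prop:jgrowth}, and handle $\partial\F_+$ using the reality and monotonicity of $j$ on the three boundary geodesics (Lemma \ref{lem:mono}) plus Corollary \ref{cor:imag_j} when $\xi$ is interior. The same-geodesic case and the interior case are handled correctly.

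The gap is in the case $\xi \in \partial\F_+$ and $\tau$ on a \emph{different} boundary geodesic. You correctly reduce to bounding the ``vertex gap'' $|j(\xi) - v|$ from below, where $v \in \{0, 1728\}$ is the $j$-value at the shared corner, but you do not carry out this bound, and the route you propose (Taylor expansion of $j$ at $i$) is not the right tool and would only address the regime near $i$, not the general $\xi \in \partial\F_+\setminus\{i\}$. The paper's resolution is much more direct and does not involve Taylor expansion at $i$: since $\delta$ is at most half the distance from $\xi$ to either geodesic not containing $\xi$, the point $\xi'$ on $\xi$'s geodesic at distance $\delta$ from $\xi$ \emph{towards} the shared vertex lies strictly between $\xi$ and the vertex; monotonicity of $j$ along the geodesic then gives $|j(\xi) - v| \ge |j(\xi) - j(\xi')|$, and Lemma \ref{lem:lin_log} applied at $|\xi' - \xi| = \delta$ gives $|j(\xi) - j(\xi')| \ge A\delta/2$ (resp.\ $A\delta^2/4$ when $\xi = i$). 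This chain is what produces the stated constant $c(\xi)$ uniformly; your proposal asserts the conclusion in the limiting regime near $i$ but does not supply an argument covering all boundary $\xi$, nor one yielding the exact constant.
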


\begin{proof}
	We want to apply the maximum modulus principle. To do this we will give lower bounds on the boundary
	\begin{figure}
		\begin{center}
			\pgfmathsetmacro{\myxlow}{-1}
			\pgfmathsetmacro{\myxhigh}{1}
			\pgfmathsetmacro{\myiterations}{2}
			\begin{tikzpicture}[scale=3]
				\draw[-latex](0,1) -- (0,2.6);

				\foreach \y  in {1,1.2,...,2}
				{   \draw[blue] (1/2,\y) -- (0,{\y+0.2}) {};
				}
				\draw[blue] (1/2,2.2) -- (1/4,2.3) {};

				\coordinate (zeta) at (1/2,{sqrt(3)/2});
				\coordinate (zeta2) at (-1/2,{sqrt(3)/2});

				\draw[thick] (zeta) -- (1/2,2.5);
				\draw[blue,thick,domain=60:90] plot ({cos(\x)}, {sin(\x)});
				\draw[thick,domain=90:120] plot ({cos(\x)}, {sin(\x)});
				\draw[blue,thick] (zeta) -- (1/2,2.3);
				\draw[blue,thick] (0,2.3) -- (1/2,2.3);
				\draw[blue,thick] (0,1) -- (0,2.3);
				\draw[thick] (zeta2) -- (-1/2,2.5);

				\pgfmathsetmacro{\eps}{.15}

				\coordinate (xi) at (1/4, 1.3);
				\draw[fill=white] (xi) circle (\eps) {};
				\draw[fill=black] (xi) circle (.3pt) node[right,font=\tiny] {$\xi$};
				\draw[thick,blue] (xi) circle (\eps);
				\draw[->] (xi) -- ++(115:\eps) node[below right,font=\tiny] {$\delta$};
			\end{tikzpicture}
		\end{center}
		\caption{\small Application of the maximum modulus principle to the blue area.}\label{fig:mmp}
	\end{figure}
	of $\F_+$, see Figure \ref{fig:mmp}.
	By the previous lemma we have
	\begin{displaymath}
		|j(\tau) - j(\xi)| \ge \frac{A}2 \delta
	\end{displaymath}
	or
	\begin{displaymath}
		|j(\tau) - j(i)| \ge \frac{A}4 \delta^2
	\end{displaymath}
	on the circle $|\tau - \xi| = \delta$.
	Also $\Im(\tau) \ge 6\Im(\xi)$ implies $\Im(\tau) \ge \Im(\xi) + 1$,
	so we obtain by applying Proposition \ref{prop:jgrowth} twice
	\begin{align*}
		|j(\xi)| &\le 2079 + e^{2\pi \Im(\xi)}
		= 2079 - 20e^{2\pi\frac{\sqrt{3}}{2}} + 20e^{2\pi\frac{\sqrt{3}}{2}} + e^{2\pi \Im(\xi)}	\\
		&< -2079 + 20e^{2\pi\frac{\sqrt{3}}{2}} + e^{2\pi \Im(\xi)}
		\le -2079 + 20e^{2\pi\Im(\xi)} + e^{2\pi \Im(\xi)}	\\
		&\le -2079 + 21e^{2\pi\Im(\xi)}
		< -2079 + e^{2\pi}e^{2\pi\Im(\xi)}	\\
		&\le -2079 + e^{2\pi\Im(\tau)}	\\
		&\le |j(\tau)|.
	\end{align*}
	Thus, by using Proposition \ref{prop:jgrowth} twice
	we get
	\begin{align*}
		|j(\tau) - j(\xi)| &\ge |j(\tau)| - |j(\xi)|	\\
		&\ge  -2079 + e^{2\pi \Im(\tau)} - (2079 + e^{2\pi \Im(\xi)})	\\
		&\ge  -2\cdot 2079 + e^{2\pi \Im(\tau)} - e^{2\pi \Im(\xi)}.
	\end{align*}
	Now we use $\Im(\tau) \ge 6\Im(\xi)$ to get
	\begin{displaymath}
		|j(\tau) - j(\xi)| \ge  -2\cdot 2079 + e^{12\pi \Im(\xi)} - e^{2\pi \Im(\xi)}.
	\end{displaymath}
	We have $e^{2x} \ge 2e^x$ for any $x \ge \log 2$ and $2\pi \Im(\xi) \ge 1$
	so that
	\begin{align*}
		|j(\tau) - j(\xi)|
		&\ge  -2\cdot 2079 + e^{4\pi \Im(\xi)} + e^{2\pi \Im(\xi)} + e^{4\pi \Im(\xi)} - e^{2\pi \Im(\xi)}	\\
		&=  -2\cdot 2079 + e^{4\pi \Im(\xi)} + e^{4\pi \Im(\xi)}.
	\end{align*}
	But also because of $\Im(\xi) \ge \sqrt{3}/2$ we get
	\begin{equation}\label{eq:j_cut_above}
		|j(\tau) - j(\xi)| \ge e^{4\pi \Im(\xi)}.
	\end{equation}
	We have $\delta \le 1/12$ by definition and Lemme 1 of \cite{faisant1987quelques} gives for $\xi \not= i$
	\begin{align*}
		\frac A2 \delta &\le \frac{A}{24} \le \frac{8\pi}{24}e^{2(\pi +1)\max\{\Im(\xi),\Im(\xi)^{-1}\}}	\\
		&\le \frac{8\pi}{24}e^{3\pi \max\{\Im(\xi),\Im(\xi)^{-1}\}}.
	\end{align*}
	So if $\Im(\xi) \ge 1$, then
	\begin{displaymath}
		|j(\tau) - j(\xi)| \ge \frac A2 \delta.
	\end{displaymath}
	If $\sqrt{3}/2 \le \Im(\xi) \le 1$, then
	\begin{align*}
		\frac A2 &\delta \le \frac{8\pi}{24}e^{3\pi \Im(\xi)^{-1}}
		\le \frac{8\pi}{24}e^{2\pi/\sqrt{3}}
		\le e^{4 \pi \cdot \sqrt{3}/2}
		\le e^{4 \pi \Im(\xi)}.
	\end{align*}
	So in any case
	\begin{displaymath}
		|j(\tau) - j(\xi)| \ge \frac A2\delta
	\end{displaymath}
	for $\xi \not= i$.
	If $\xi = i$, then as described after Lemma \ref{lem:lin_log} we get
	\begin{displaymath}
		\frac A4\delta^2 \le 12414\delta^2 \le 87
	\end{displaymath}
	which together with \eqref{eq:j_cut_above} gives
	\begin{displaymath}
		|j(\tau) - j(i)| \ge \frac A4\delta^2.
	\end{displaymath}

	So we have treated all the $\tau$ with large imaginary part.
	Now we have to go through the different cases for $\xi$ to bound the boundary.
	We start with $\Re(\xi) = 1/2$ so that we are in the following case.
		\begin{center}
			\pgfmathsetmacro{\myxlow}{-1}
			\pgfmathsetmacro{\myxhigh}{1}
			\pgfmathsetmacro{\myiterations}{2}
			\begin{tikzpicture}[scale=3]
				\draw[-latex](0,1) -- (0,2.6);

				\coordinate (zeta) at (1/2,{sqrt(3)/2});
				\coordinate (zeta2) at (-1/2,{sqrt(3)/2});

				\pgfmathsetmacro{\eps}{.15}

				\coordinate (xi) at (1/2, 1.3);

				\draw[thick] (zeta) -- (1/2,2.5);
				\draw[blue,thick,domain=60:90] plot ({cos(\x)}, {sin(\x)});
				\draw[thick,domain=90:120] plot ({cos(\x)}, {sin(\x)});
				\draw[blue,thick] (zeta) -- ($(xi)-(0,\eps)$);
				\draw[blue,thick] ($(xi)+(0,\eps)$) -- (1/2,2.3);
				\draw[blue,thick] (0,2.3) -- (1/2,2.3);
				\draw[blue,thick] (0,1) -- (0,2.3);
				\draw[thick] (zeta2) -- (-1/2,2.5);

				\draw[fill=black] (xi) circle (.3pt) node[right,font=\tiny] {$\xi$};
				\draw[thick,blue] (xi) ++(90:\eps) arc (90:270:\eps);
				\draw[->] (xi) -- ++(115:\eps) node[below,font=\tiny] {$\delta$};
			\end{tikzpicture}
		\end{center}
		For the remainder of the proof we will be using that $j$ is monotonically increasing or decreasing
		on the boundary as shown in Lemma \ref{lem:mono}.
		If $\tau$ is on the same boundary component as $\xi$, then $\Im(\tau) \ge \Im(\xi) + \delta$
		or $\Im(\tau) \le \Im(\xi) - \delta$. Therefore, by monotonicity either $|j(\tau)| > |j(\xi)|$ which
		implies
		\begin{displaymath}
			|j(\tau) - j(\xi)| \ge j(\xi) - j(\tau) \ge j(\xi) - j(\xi + i\delta) = |j(\xi) - j(\xi + i\delta)| \ge \frac A2\delta
		\end{displaymath}
		or $|j(\tau)| < |j(\xi)|$ and
		\begin{displaymath}
			|j(\tau) - j(\xi)| \ge |j(\xi)| - |j(\tau)| \ge j(\xi - i\delta) - j(\xi) = |j(\xi) - j(\xi - i\delta)| \ge \frac A2\delta.
		\end{displaymath}
		Note that the last inequality of both displays follows from Lemma \ref{lem:lin_log} on the boundary
		$|\tau-\xi| = \delta$.
		If $|\tau| = 1$ or $\Re(\tau) = 0$, then $j(\tau) \ge 0$ and $j(\xi) < 0$, so that
		\begin{align*}
			|j(\tau) - j(\xi)| &= j(\tau) - j(\xi) \ge -j(\xi)	\\
			&\ge j(\xi-i\delta) - j(\xi) = |j(\xi) - j(\xi-i\delta)|	\\
			&\ge \frac A2 \delta.
		\end{align*}
		Here we have used Lemma \ref{lem:lin_log} for the last estimate.
		Altogether, if $\Re(\xi) = 1/2$ we get by the minimum modulus principle
		\begin{displaymath}
			|j(\tau) - j(\xi)| \ge \frac A2 \delta
		\end{displaymath}
		as desired.

		The second case is when $\Re(\xi) = 0$ and $\xi \not= i$.
		We have $j(\tau) \le 0$ if $\Re(\tau) = 1/2$, and thus
		\begin{align*}
			|j(\xi) - j(\tau)| &\ge j(\xi) \ge j(\xi) - j(\xi-i\delta)	\\
			&= |j(\xi) - j(\xi-i\delta)|	\\
			&\ge \frac A2 \delta.
		\end{align*}
		If $|\tau| = 1$, then again by monotonicity and Lemma \ref{lem:lin_log}
		\begin{align*}
			|j(\xi) - j(\tau)| &\ge j(\xi) - j(\tau) \ge j(\xi) - j(\xi-i\delta)	\\
			&= |j(\xi)-j(\xi-i\delta)|	\\
			&\ge \frac A2 \delta.
		\end{align*}
		For the case when $\Re(\tau) = 0$ we have two subcases. When $\Im(\tau) \ge \Im(\xi) + \delta$,
		it follows $j(\tau) > j(\xi)$ and hence
		\begin{equation}\label{eq:iabove}
			|j(\xi) - j(\tau)| \ge j(\xi + i\delta) - j(\xi) \ge \frac A2\delta.
		\end{equation}
		Or we have $\Im(\tau) \le \Im(\xi) - \delta$ and we get
		\begin{displaymath}
			|j(\xi) - j(\tau)| \ge j(\xi) - j(\xi - i\delta) \ge \frac A2\delta.
		\end{displaymath}
		In sum, by applying the minimum modulus principle we obtain
		\begin{equation}\label{eq:boundary_bound}
			|j(\tau) - j(\xi)| \ge \frac A2 \delta.
		\end{equation}

		The third case is $|\xi| = 1$ and $\xi \not= i$.
		Write $\xi = e^{i\theta}$.
		Again we have three subcases. If $\Re(\tau) = 1/2$, then $j(\tau) \le 0$ and
		\begin{equation}\label{eq:circle2right}
			\begin{aligned}
				|j(\xi) - j(\tau)| &\ge j(\xi) - j(\tau) \ge j(\xi)	\\
				&\ge j(\xi) - j\left(e^{i(\theta - 2 \arcsin(\delta/2))}\right)	\\
				&= |j(\xi)-j\left(e^{i(\theta - 2 \arcsin(\delta/2))}\right)|	\\
				&\ge \frac A2\delta.
			\end{aligned}
		\end{equation}
		Note that $e^{i(\theta - 2 \arcsin(\delta/2))}$ is one of the two points where the
		circle of radius $\delta$ and the unit circle intersect.
		If $\Re(\tau) = 0$, then $j(\tau) \ge 1728 > j\left(e^{i(\theta + 2 \arcsin(\delta/2))}\right) > j(\xi)$ and
		\begin{align*}
			|j(\xi) - j(\tau)| &\ge j(\tau) - j(\xi) \ge j\left(e^{i(\theta + 2 \arcsin(\delta/2))}\right) - j(\xi)	\\
			&= |j(\xi) - j\left(e^{i(\theta + 2 \arcsin(\delta/2))}\right)|	\\
			&\ge \frac A2\delta.
		\end{align*}
		If $|\tau| = 1$, then $j(\tau) < j(\xi)$ or $j(\tau) > j(\xi)$ and Lemma \ref{lem:lin_log}
		tells us
		\begin{equation}\label{eq:itotheright}
			|j(\xi) - j(\tau)| \ge \left|j(\xi) - j\left(e^{i(\theta \pm 2 \arcsin(\delta/2))}\right)\right| \ge \frac A2\delta.
		\end{equation}
		By applying the minimum modulus principle we obtain the same result as in equation \eqref{eq:boundary_bound}.

		If $\xi = i$, then the estimates in equations \eqref{eq:iabove} and \eqref{eq:itotheright} hold
		with $A\delta/2$ replaced by $A\delta^2/4$. Also equation \eqref{eq:circle2right} holds with
		$\xi$ replaced by $i$ and $A\delta/2$ replaced by $A\delta^2/4$, and thus by the minimum modulus
		principle
		\begin{displaymath}
			|j(\tau) - j(i)| \ge \frac A4 \delta^2.
		\end{displaymath}

		The last case is $0 < \Re(\xi) < 1/2$ and $|\xi| > 1$. Let $\tau \in \partial\F_+$. Then
		$j(\tau)$ is real and we have
		\begin{displaymath}
			|j(\xi) - j(\tau)| \ge |\Im(j(\xi)) - \Im(j(\tau))| = |\Im(j(\xi))|.
		\end{displaymath}
		This is the case shown in Figure \ref{fig:mmp}.
		Applying the minimum modulus principle gives the desired result.
\end{proof}

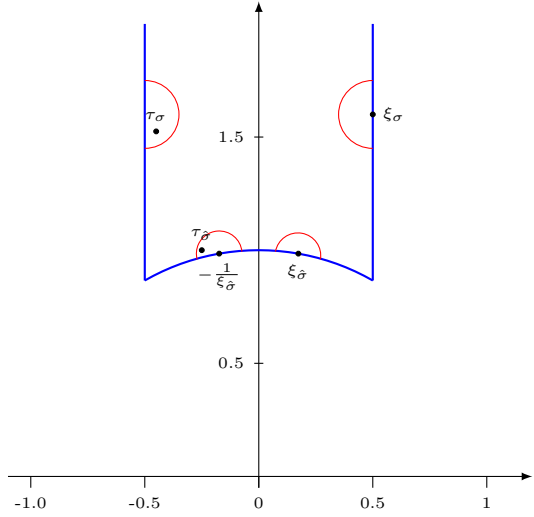
\begin{figure}
	\begin{center}
		\pgfmathsetmacro{\myxlow}{-1}
		\pgfmathsetmacro{\myxhigh}{1}
		\pgfmathsetmacro{\myiterations}{2}
		\begin{tikzpicture}[scale=3]
			\draw[-latex](\myxlow-0.1,0) -- (\myxhigh+0.2,0);
			\pgfmathsetmacro{\succofmyxlow}{\myxlow+0.5}
			\foreach \x in {\myxlow,\succofmyxlow,...,\myxhigh}
			{   \draw (\x,0) -- (\x,-0.05) node[below,font=\tiny] {\x};
			}
			\foreach \y  in {0.5,...,1.5}
			{   \draw (0.02,\y) -- (-0.02,\y) node[left,font=\tiny] {\pgfmathprintnumber{\y}};
			}
			\draw[-latex](0,-0.05) -- (0,2.1);

			\coordinate (zeta) at (1/2,{sqrt(3)/2});
			\coordinate (zeta2) at (-1/2,{sqrt(3)/2});

			\draw[blue,thick,domain=60:120] plot ({cos(\x)}, {sin(\x)});
			\draw[blue,thick] (zeta) -- (1/2,2);
			\draw[blue,thick] (zeta2) -- (-1/2,2);

			\pgfmathsetmacro{\eps}{.15}

			\coordinate (xi) at (1/2, 1.6);
			\coordinate (xi_1) at (-1/2, 1.6);
			\coordinate (tau) at ({-1/2+\eps/3}, {1.6-\eps/2});
			\draw[fill=black] (xi) circle (.3pt) node[right,font=\tiny] {$\xi_\sigma$};
			\draw[red] (xi) ++(90:\eps) arc (90:270:\eps);
			\draw[red] (xi_1) ++(-90:\eps) arc (-90:90:\eps);
			\draw[fill=black] (tau) circle (.3pt) node[above,font=\tiny] {$\tau_\sigma$};

			\pgfmathsetmacro{\eps}{.1}
			\coordinate (xi) at ({cos(80)},{sin(80)});
			\coordinate (xi1) at ({cos(100)},{sin(100)});
			\coordinate (tau) at (-0.25, 1);
			\draw[fill=black] (xi) circle (.3pt) node[below,font=\tiny] {$\xi_{\hat \sigma}$};
			\draw[fill=black] (xi1) circle (.3pt) node[below,font=\tiny] {$-\frac 1{\xi_{\hat \sigma}}$};
			\draw[red] (xi) ++(-15:\eps) arc (-10:170:\eps);
			\draw[red] (xi1) ++(5:\eps) arc (5:195:\eps);
			\draw[fill=black] (tau) circle (.3pt) node[above,font=\tiny] {$\tau_{\hat \sigma}$};
		\end{tikzpicture}
	\end{center}
	\caption{Neighborhoods of points on the boundary}\label{fig:nbh_boundary}
\end{figure}

Note that the same claim holds for $\F_-$ since we have the symmetry $\Re(j(x+iy)) = \Re(j(-x+iy))$
and $\Im(j(x+iy)) = -\Im(j(-x+iy))$ which directly follows from the $q$--expansion.
If $j(\tau)$ and $j(\xi)$ are close we want $|\tau_\sigma - \xi_\sigma|$ to be small too,
but we can not get this
in general as Figure \ref{fig:nbh_boundary} shows.

\begin{lem}\label{lem:st_jbound}
	In the same setting as in the previous lemma,
	if $|j(\tau) - j(\xi)| < c(\xi)$, then $|\tau - M\xi| < \delta$ with $M\xi \in \bar\F$
	for some $M \in \T$ where
	$\T = \{ \begin{smatrix}1&0\\0&1\end{smatrix}, \begin{smatrix}1&\pm 1\\0&1\end{smatrix}, \begin{smatrix}0&-1\\1&0\end{smatrix} \}$.
\end{lem}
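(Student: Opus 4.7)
The plan is to invoke Lemma \ref{lem:jboundbelow} (together with its $\F_-$-analog noted after the proof) combined with the $\SL_2(\Z)$-invariance of $j$. Since every $M \in \T$ acts by an element of $\SL_2(\Z)$, we have $j(M\xi) = j(\xi)$, so the hypothesis $|j(\tau) - j(\xi)| < c(\xi)$ may be rewritten as $|j(\tau) - j(M\xi)| < c(\xi)$. A preliminary check is that the geometric constants $A$, $B$, $\delta$ and hence $c(\xi)$ are unchanged when $\xi$ is replaced by $M\xi$, for those $M \in \T$ with $M\xi \in \bar\F$. This is automatic for $M = T^{\pm 1}$ since they act by translation, and for $M = S$ it follows from $|S'(\xi)| = 1/|\xi|^2 = 1$, which holds precisely when $|\xi| = 1$, the only case where $S\xi \in \bar\F$.

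Assuming $\tau \in \bar\F$, I would first split into two main cases. If $\tau \in \F_+$, applying the contrapositive of Lemma \ref{lem:jboundbelow} to the pair $(\tau, \xi)$ directly gives $|\tau - \xi| < \delta$, so $M = I$ suffices.

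If $\tau \in \F_-$, I would split further according to where $\xi$ lies in $\F_+$. When $\Re(\xi) = 1/2$, take $M = T^{-1}$, so that $M\xi \in \F_-$ sits on the left vertical boundary. When $|\xi| = 1$ with $\Re(\xi) > 0$, take $M = S$, so that $M\xi \in \F_-$ sits on the left arc of the unit circle. When $\Re(\xi) = 0$ (which also covers $\xi = i$), we already have $\xi \in \F_+ \cap \F_-$, so $M = I$ works. In each of these subcases, applying the $\F_-$-version of Lemma \ref{lem:jboundbelow} to $(\tau, M\xi)$ yields $|\tau - M\xi| < \delta$ as required.

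The remaining subcase, $\xi$ in the interior of $\F_+$ with $0 < \Re(\xi) < 1/2$ and $|\xi| > 1$, cannot arise under the hypothesis: by Corollary \ref{cor:imag_j} we have $\Im(j(\xi)) < 0$, whereas $\tau \in \F_-$ forces $\Im(j(\tau)) \geq 0$ (strictly positive in the interior by the same corollary, zero on the boundary by Lemma \ref{lem:mono}). Hence $|j(\tau) - j(\xi)| \geq |\Im(j(\xi))| \geq c(\xi)$, contradicting the hypothesis. The main obstacle I anticipate is the careful bookkeeping required to verify that $c(\xi)$ is genuinely invariant under the relevant $\T$-transformations, so that Lemma \ref{lem:jboundbelow} can be applied at $M\xi$ with the very same threshold $c(\xi)$.
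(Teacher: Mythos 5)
Your argument matches the paper's proof essentially step for step: the same-side case is dispatched directly by Lemma \ref{lem:jboundbelow}, and for opposite sides (the paper normalizes to $\Re(\xi)<0$, $\Re(\tau)>0$ by conjugation symmetry, the mirror image of your split) it treats the vertical boundary with $T^{\pm 1}$, the unit circle with $S$, and the interior strip via Corollary \ref{cor:imag_j}, exactly as you do. The invariance of $c(\xi)$ and $\delta$ under the relevant maps, which you rightly flag, is also left implicit in the paper; it follows from $j(M\xi)=j(\xi)$, $|j'(M\xi)|=|j'(\xi)|$ (trivial for $T^{\pm 1}$; for $S$ because $|S'(\xi)|=1/|\xi|^2=1$ on $|\xi|=1$), and the reflection symmetry between $\F_+$ and $\F_-$.
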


\begin{proof}
	If $\xi, \tau$ are both in $\F_+$ or both in $\F_-$, then $|\tau - \xi| \le \delta$
	by Lemma \ref{lem:jboundbelow}.
	We now can assume without loss of generality $\Re(\xi) < 0$ and $\Re(\tau) > 0$.
	If $\xi$ is on the boundary of $\F$, then $\Re(\xi) = -1/2$.
	Then we can apply Lemma \ref{lem:jboundbelow} to $\tau$ and $\begin{smatrix}1&1\\0&1\end{smatrix}\xi$
	and use $j(\xi) = j(\xi+1)$.
	If $|\xi| = 1$ and $\Re(\xi) < 0$, then we can again apply Lemma \ref{lem:jboundbelow}
	to $\tau$ and $\begin{smatrix}0&-1\\1&0\end{smatrix}\xi$
	and use $j(\xi) = j\left(\begin{smatrix}0&-1\\1&0\end{smatrix}\xi\right)$.
	If $-1/2 < \Re(\xi) < 0$ and $\tau \in \F_+$, then by Corollary \ref{cor:imag_j}
	\begin{align*}
		|j(\xi) - j(\tau)| &\ge |\Im(j(\xi)) - \Im(j(\tau))|
		= \Im(j(\xi)) - \Im(j(\tau))	\\
		&= \Im(j(\xi)) + |\Im(j(\tau))|	\\
		&\ge \Im(j(\xi))	\\
		&\ge c(\xi)
	\end{align*}
	contrary to the assumption.
\end{proof}

The following lemma can be found in \cite{habSM} as Lemma 5.

\begin{lem}\label{lem:ht_less_D}
	Let $\sigma\colon \bar\Q \hookrightarrow \C$ and let $\tau$ be imaginary quadratic.
	Let $\tau_\sigma$ satisfy $j(\tau_\sigma) = \sigma(j(\tau))$ with $\tau_\sigma \in \F$.
	If $\Delta$ is the discriminant of the endomorphism ring associated to $j(\tau)$,
	then $\tau_\sigma$ is imaginary quadratic and $h(\tau_\sigma) \le \log \sqrt{|\Delta|}$.
\end{lem}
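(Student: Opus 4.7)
The plan is to exploit the classical description of the Galois orbit of $j(\tau)$ via reduced quadratic forms of discriminant $\Delta$, and then reduce the height bound to an elementary inequality between the coefficients of such a form.

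First, I would invoke the main theorem of complex multiplication (as in \cite{cox2011primes}, Chapter 13): the Galois conjugates of $j(\tau)$ are precisely the values $j(\tau')$ where $\tau' = (-b+\sqrt{\Delta})/(2a)$ ranges over the points associated to the primitive reduced positive definite quadratic forms $(a,b,c) \in \Quads(\Delta)$. In particular, $\sigma(j(\tau)) = j(\tau_\sigma)$ forces $\tau_\sigma = (-b+\sqrt{\Delta})/(2a)$ for some such $(a,b,c)$. This immediately implies that $\tau_\sigma$ is imaginary quadratic, and its minimal polynomial over $\Z$ is $aX^2+bX+c$ (primitive, since the form is primitive).

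Next I would compute the height directly from the Mahler measure. Since $\tau_\sigma$ has degree $2$ with conjugates $\tau_\sigma,\bar\tau_\sigma$, we get
\begin{displaymath}
h(\tau_\sigma) = \tfrac{1}{2}\log\left(a\cdot\max(1,|\tau_\sigma|)\cdot\max(1,|\bar\tau_\sigma|)\right).
\end{displaymath}
Because $\tau_\sigma \in \F$ we have $|\tau_\sigma| \ge 1$ and $|\bar\tau_\sigma| = |\tau_\sigma|$, so the Mahler measure equals $a|\tau_\sigma|^2 = a\cdot(c/a) = c$ (using that the product of the roots of $aX^2+bX+c$ is $c/a$). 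Hence $h(\tau_\sigma) = \tfrac{1}{2}\log c$.

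Finally, I would bound $c$ using the definition of a reduced form, namely $|b|\le a\le c$. From $|\Delta| = 4ac - b^2$ and $b^2 \le a^2 \le ac$ we obtain
\begin{displaymath}
4ac = |\Delta| + b^2 \le |\Delta| + ac,
\end{displaymath}
so $3ac \le |\Delta|$ and in particular $c \le |\Delta|/3 \le |\Delta|$. Therefore $h(\tau_\sigma) = \tfrac{1}{2}\log c \le \tfrac{1}{2}\log|\Delta| = \log\sqrt{|\Delta|}$, as required. There is no real obstacle here beyond quoting the CM correspondence correctly; the rest is a two-line arithmetic estimate using reducedness.
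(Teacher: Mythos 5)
Your proof is correct, and it is worth noting that the paper does not actually supply its own argument here---it simply cites Habegger's Lemma~5 in \cite{habSM}---so your self-contained derivation is a genuine addition rather than a restatement. The route you take (reduce to the Mahler measure of the primitive form $aX^2+bX+c$, use $\tau_\sigma \in \F$ so $|\tau_\sigma|\ge 1$ and hence $M(aX^2+bX+c)=a|\tau_\sigma|^2=c$, then bound $c\le |\Delta|/3$ from the reduced-form inequalities $|b|\le a\le c$) is the standard one and almost certainly the one Habegger has in mind; the computation is clean and the inequality $3ac\le|\Delta|$ is exactly the classical bound. One small remark: you are implicitly using that $a\ge 1$ to pass from $ac\le|\Delta|/3$ to $c\le|\Delta|/3$, and that the form is positive definite (so $a>0$) to know the leading coefficient of the minimal polynomial is positive---both are immediate from the definition of $\Quads(\Delta)$, but worth stating. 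With those tiny clarifications the argument is complete.
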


With these lemmas we are now able to bound the height from above using linear forms in logarithms.
Instead of using a result by Masser \cite{masserelliptic} we are going to use a result by
David \cite{davidlinearforms} to make the constants explicit.
We are going to use this lemma to prove a result on linear forms. It is a special case of
Th\'eor\`eme 2.1 in \cite{davidlinearforms}. The height of a matrix in $\sl2z$ will be
the height when regarded as a member of $\Q^4$.

\begin{lem}\label{lem:lin_forms}
	Let $\tau \in \H$ be imaginary quadratic and let $E\colon y^2 = 4x^3 - g_2 x - g_3$
	be an elliptic curve without complex multiplication such that $E(\C) \simeq \C/(\omega_1\Z+\omega_2\Z)$
	with $\xi = \omega_2/\omega_1 \in \F$.
	Assume $E$ is defined over a number field of degree $D$.
	Put $h = \max\{1, h(1,g_2,g_3), h(j(\xi))\}$ and assume $\Delta$ is the discriminant
	of the endomorphism ring of an elliptic curve with $j$--invariant $j(\tau)$.
	Pick any $M = \begin{smatrix}\alpha&\beta\\\gamma&\delta\end{smatrix}\in \sl2z$.
	If $|\Delta| \ge \max\{2D, e^{12\pi h}, 4H(M)^4\}$,
	then we have
	\begin{displaymath}
		\log|(\gamma\tau-\alpha)\omega_2 - (\delta\tau-\beta)\omega_1| \ge -c_2 (\log|\Delta|)^4
	\end{displaymath}
	with
	\begin{displaymath}
		c_2 = 2^{50} 3^{43} 5^{18} \cdot D^6 \cdot h^2.
	\end{displaymath}
\end{lem}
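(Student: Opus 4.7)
The plan is to apply Théorème 2.1 of \cite{davidlinearforms} (a lower bound for linear forms in two elliptic logarithms on a non--CM elliptic curve defined over a number field) to the quantity
\begin{equation*}
\Lambda = (\gamma\tau - \alpha)\omega_2 - (\delta\tau - \beta)\omega_1
       = \tau(\gamma\omega_2 - \delta\omega_1) - (\alpha\omega_2 - \beta\omega_1).
\end{equation*}
In this form, $u_1 := \alpha\omega_2 - \beta\omega_1$ and $u_2 := \gamma\omega_2 - \delta\omega_1$ are periods of the lattice $L = \omega_1\Z + \omega_2\Z$, hence elliptic logarithms (of the zero point) on the model $E(\C) \simeq \C/L$. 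The coefficients of the linear form are $-1$ and the imaginary quadratic number $\tau$, so we are exactly in the situation covered by David's theorem, provided that all the invariants that appear in her estimate are under control.

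I would then translate each parameter of David's estimate into the data we have. The height of $\tau$ satisfies $h(\tau) \le \tfrac12 \log|\Delta|$ by Lemma \ref{lem:ht_less_D}. The entries of $M$ are bounded by $H(M)$, and the hypothesis $|\Delta| \ge 4H(M)^4$ yields $\log H(M) \le \tfrac14 \log|\Delta|$, so that the integer coefficients expressing $u_1, u_2$ in terms of $\omega_1, \omega_2$ have logarithmic size at most $\tfrac14\log|\Delta|$. The hypothesis $|\Delta| \ge e^{12\pi h}$ gives $h \le \tfrac{1}{12\pi}\log|\Delta|$, which simultaneously controls the height of $j(\xi)$, the Weierstrass coefficients $g_2, g_3$ and (up to absolute constants) the curve--dependent invariants of $E$ that enter David's bound. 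Finally, $|\Delta| \ge 2D$ forces the degree of the field of definition of $E$ to be bounded logarithmically by $\log|\Delta|$.

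Substituting these bounds into the four logarithmic factors that appear in Théorème 2.1 and inserting the explicit numerical constant stated there produces a lower bound of the shape $-c_2 (\log|\Delta|)^4$ with $c_2 = 2^{50}3^{43}5^{18} D^6 h^2$. The main obstacle is bookkeeping: David's theorem involves several auxiliary height, degree and period invariants, and some care is needed to verify that each of them is dominated by $\log|\Delta|$ under the three hypotheses $|\Delta| \ge 2D$, $|\Delta| \ge e^{12\pi h}$, $|\Delta| \ge 4H(M)^4$, and that the product of the resulting numerical factors is absorbed by the stated $c_2$. Once this matching of parameters is done, the conclusion is a direct substitution into David's estimate.
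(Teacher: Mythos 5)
Your decomposition is genuinely different from the paper's, and it does not in fact yield the stated bound. The paper applies David's theorem with the \emph{fundamental periods} as the two elliptic logarithms ($u_1 = \omega_2$, $u_2 = \omega_1$) and puts $\mathcal{L}(z_0,z_1,z_2) = (\gamma\tau-\alpha)z_1 - (\delta\tau-\beta)z_2$, so that the $M$-dependence sits in the \emph{coefficient heights} and is absorbed into $B = |\Delta|$ via the estimate $H(\gamma\tau - \alpha) \le 2H(M)^2 H(\tau) \le |\Delta|$. Crucially, the auxiliary parameters $V_1, V_2$ then satisfy $\frac{3\pi |u_i|^2}{|\omega_1|^2 \Im(\xi) D} \le 12\pi h$, so one can take $V_1 = V_2 = e^{12\pi h}$, a quantity independent of $\Delta$ and $M$. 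This is precisely what produces the factor $\log V_1 \log V_2 \asymp h^2$ and leaves the $\Delta$-dependence as $(\log|\Delta|)^4$.

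In your version the elliptic logarithms are $u_1 = \alpha\omega_2 - \beta\omega_1$ and $u_2 = \gamma\omega_2 - \delta\omega_1$, which scale with $M$: one has $|u_i|^2/|\omega_1|^2 \lesssim H(M)^2(1+|\xi|)^2$, so the $V_i$ needed to satisfy David's conditions must satisfy $\log V_i \gtrsim 2\log H(M)$. Under the hypothesis $|\Delta| \ge 4H(M)^4$ this only gives $\log V_i \lesssim \tfrac12\log|\Delta| + O(h)$, so $\log V_1 \log V_2$ can be as large as $\tfrac14 (\log|\Delta|)^2$ rather than a constant times $h^2$. Combined with the remaining $(\log B)(\log\log B + h + \cdots)^3 \lesssim (\log|\Delta|)^4$ factor, your decomposition produces a bound of the shape $(\log|\Delta|)^6$, not $(\log|\Delta|)^4$, and the stated $c_2 = 2^{50}3^{43}5^{18} D^6 h^2$ does not emerge. (It would go through if you additionally assumed $H(M) \ll 1$, but the lemma is stated for arbitrary $M$ with $|\Delta| \ge 4H(M)^4$.) The fix is to switch to the paper's decomposition: keep the $u_i$ fixed as the fundamental periods, so that $V_i$ depends only on $E$ and $h$, and route the $\tau$- and $M$-dependence through the coefficient heights.
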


\begin{proof}
	We follow the notation in \cite{davidlinearforms}.
	Put $\mathcal{L}(z_0,z_1,z_2) = (\gamma\tau-\alpha)z_1 - (\delta\tau-\beta)z_2$.
	We choose the variables $u_1, u_2$ in the theorem to be $\omega_2$ and $\omega_1$, respectively.
	Then $\gamma_1 = \gamma_2 = (0,0,1)$ and $v = (1,\omega_2,\omega_1)$.
	Note that $\mathcal{L}(v) \not= 0$ since otherwise we would have $\tau = M.(\omega_2/\omega_1) = M\xi$
	but $\xi$ and thus $M\xi$ are not imaginary quadratic.
	We have to estimate the height of the coefficients of the linear form.
	By Lemma \ref{lem:ht_less_D} we have $h(\tau) \le \log\sqrt{|\Delta|}$.
	Thus the multiplicative height of the coefficients can be estimated by
	\begin{align*}
		H(\gamma\tau-\beta) &\le 2H(\gamma\tau)H(\beta) \le 2H(\gamma)H(\tau)H(\beta) \le 2H(M)^2H(\tau)	\\
		&\le 2H(M)^2|\Delta|^{1/2} \le |\Delta|
	\end{align*}
	and similarly
	\begin{align*}
		H(\delta\tau-\beta) \le |\Delta|.
	\end{align*}
	We thus can work with $B = |\Delta|$ which satisfies condition (2) of Th\'eor\`eme 2.1.
	Moreover, we can pick $V = V_1 = V_2 = e^{12\pi h}$. To see this note
	\begin{align*}
		\frac{3\pi |u_1|^2}{|\omega_1|^2 \Im(\xi) 2D}
		\le \frac{3\pi |u_1|^2}{|\omega_1|^2 \Im(\xi) D}
		\le \frac{3\pi |\xi|^2}{\Im(\xi) D}
		\le 4\pi \frac{|\xi|}D
	\end{align*}
	and also
	\begin{align*}
		\frac{3\pi |u_2|^2}{|\omega_1|^2 \Im(\xi) D}
		\le \frac{3\pi}{\Im(\xi) D}
		\le 4\pi.
	\end{align*}
	By Lemme 1 item (iv) in \cite{faisant1987quelques} we have
	\begin{align*}
		|\xi| \le \frac 32 \log \max\{e,|j(\xi)|\}
	\end{align*}
	and therefore
	\begin{align*}
		\frac{|\xi|}D &\le \frac 32 \frac 1D \log \max\{e,|j(\xi)|\}	\\
		&\le \frac 32 \frac 1D \left(1 + \log \max\{1,|j(\xi)|\}\right)	\\
		&\le \frac 32 \frac 1D \left(1 + \sum_{\nu \in M_K} d_\nu \log \max\{1,|j(\xi)|_\nu\}\right)	\\
		&\le \frac 32 + \frac 32 h(j(\xi)).
	\end{align*}
	Here $K$ denotes the field of definition for $E$.
	Thus conditions (1) and (3) of the theorem in \cite{davidlinearforms} are satisfied.
	We thus can apply the theorem and get the lower bound
	\begin{align*}
		\log \lvert \mathcal{L}(v) \rvert &\ge -C \cdot 2^6 D^6 (\log B + \log(2D) + 1)
														\cdot(\log\log B+h+\log(2D)+1)^3\log V_1 \log V_2	\\
					&\ge - 2^{16} 3^3 C\cdot D^6 \cdot h^2 \cdot (\log B)^4
	\end{align*}
	since $h \le \log B$ and $\log(2D) \le \log B$. The constant $C$ comes from \cite{davidlinearforms}
	and is given by
	\begin{displaymath}
		C = 10^{18}\cdot 4^8\cdot 3^{40}.
	\end{displaymath}
	This gives the desired inequality.
\end{proof}

For the following, if $\tau \in \F$ with $j(\tau)$ algebraic
and a field embedding $\sigma\colon \Q(\alpha) \hookrightarrow \C$, $\alpha \in \bar\Q$, are given,
then $\tau_\sigma \in \F$ is defined by $\sigma(j(\tau)) = j(\tau_\sigma)$.
Note that for fixed $\xi \in \F$ different from $\zeta, \zeta^2, i$ we have $j'(\xi_\sigma) \not= 0$.
Suppose that $j(\xi) = \alpha$ is algebraic.
We define the function
\begin{equation}\label{eq:penalty}
	\Pen(\xi) = \log\max_\sigma\left\{1,c(\xi_\sigma)^{-1}\right\},
\end{equation}
where $\sigma$ runs over all embeddings $\sigma\colon \Q(\alpha) \hookrightarrow \C$,
and $c(\xi_\sigma)$ is defined as in Lemma \ref{lem:jboundbelow}
Note that the expression in the maximum is larger than 12 since $\delta_\sigma \le 1/12$, and hence $\Pen(\xi) > 0$.
The function is large when some $\xi_\sigma$ is close to one of the three points $i, \zeta, \zeta^2$ or
$\xi_\sigma$ is close to the boundary of $\F$ or to the vertical imaginary axis.

\begin{prop}\label{prop:ht_bound_w_sum}
	Assume $j$ is a singular modulus.
	Let $\alpha = j(\xi)$, $\xi \in \F$, be an algebraic number that is the $j$--invariant of an elliptic curve
	without complex multiplication.
	Let $E\colon y^2 = 4x^3 - g_2 x - g_3$ be a model for $\alpha$ such that $E(\C) \simeq \C/(\omega_1\Z+\omega_2\Z)$
	and $\xi = \omega_2/\omega_1$. Similarly, choose a model for $E^\sigma$ for any $\sigma\colon \Q(\alpha) \hookrightarrow \C$.
	Further assume that $E$ is defined over a field of degree $D$ and $|\Delta| \ge \max\{2D, e^{12\pi h}\}$.
	Assume that $j-\alpha$ is an algebraic unit
	and let $0 < \varepsilon < 1/4$.
	We can bound the height by
	\begin{align*}
		h(j-\alpha)
		\le& c_2\frac{\sum_{\sigma\colon \Q(j,\alpha) \rightarrow \C}\sum_{M \in \T}\cm(\Delta;M\xi_\sigma;\varepsilon)}{16\cdot\cm(\Delta)}
				(\log|\Delta|)^4 + 5\Pen(\xi) + 4\mathcal{M}(\xi) +  |\log \varepsilon|,
	\end{align*}
	where $\T = \{ \begin{smatrix}1&0\\0&1\end{smatrix}, \begin{smatrix}1&\pm 1\\0&1\end{smatrix}, \begin{smatrix}0&-1\\1&0\end{smatrix} \}$
	and $\mathcal{M}(\xi) = \log\max_\sigma\{1,|\omega_{\sigma,1}|,|\omega_{\sigma,2}|\}$.
\end{prop}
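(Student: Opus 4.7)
The plan is to start from the unit-height identity
\[
h(j-\alpha)=-\frac{1}{n}\sum_{|\sigma(j-\alpha)|<1}\log|\sigma(j-\alpha)|,\qquad n=[\Q(j,\alpha):\Q],
\]
which applies because $j-\alpha$ is an algebraic unit. For each contributing embedding $\sigma\colon\Q(j,\alpha)\hookrightarrow\C$ I fix $\tau_\sigma,\xi_\sigma\in\F$ with $j(\tau_\sigma)=\sigma(j)$ and $j(\xi_\sigma)=\sigma(\alpha)$, so that $|\sigma(j-\alpha)|=|j(\tau_\sigma)-j(\xi_\sigma)|$. Since $\alpha$ is not a singular modulus and $j(i)=1728$, $j(\zeta)=0$ lie in $\Q$, no Galois conjugate of $\alpha$ equals $1728$ or $0$, so $\xi_\sigma\notin\{i,\zeta,\zeta^2\}$ throughout and the generic linear case of Lemma~\ref{lem:lin_log} is always applicable. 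The strategy is to bound the contribution of all ``far'' embeddings by a multiple of $\Pen(\xi)+|\log\varepsilon|$, and to show that ``close'' embeddings each contribute at most $c_2(\log|\Delta|)^4$ while being counted by $\sum_\sigma\sum_M\cm(\Delta;M\xi_\sigma;\varepsilon)$.

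The far regime splits in two sub-cases. If $|j(\tau_\sigma)-j(\xi_\sigma)|\ge c(\xi_\sigma)$, then $-\log|\sigma(j-\alpha)|\le\Pen(\xi)$ directly from the definition of $\Pen$. If on the other hand $|j(\tau_\sigma)-j(\xi_\sigma)|<c(\xi_\sigma)$ but $|\tau_\sigma-M\xi_\sigma|\ge\varepsilon$ for every $M\in\T$, Lemma~\ref{lem:st_jbound} still produces an $M\in\T$ with $|\tau_\sigma-M\xi_\sigma|<\delta_\sigma$; applying Lemma~\ref{lem:lin_log} and $\SL_2(\Z)$-invariance of $j$ gives $|j(\tau_\sigma)-j(\xi_\sigma)|\ge A_\sigma\varepsilon/2$, and absorbing $-\log A_\sigma$ into $\Pen(\xi)$ (via $c(\xi_\sigma)=A_\sigma\delta_\sigma/2$ and $\delta_\sigma\le 1/12$) yields a per-embedding contribution of at most $\Pen(\xi)+|\log\varepsilon|+O(1)$. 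Since these two sub-cases together involve at most $n$ embeddings, after dividing by $n$ they contribute $O(\Pen(\xi)+|\log\varepsilon|)$ to $h(j-\alpha)$.

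The close regime consists of those $\sigma$ for which some $M\in\T$ satisfies $|\tau_\sigma-M\xi_\sigma|<\varepsilon$. Lemma~\ref{lem:lin_log} reduces $-\log|\sigma(j-\alpha)|$ to a bound on $-\log|\tau_\sigma-M\xi_\sigma|$, which I attack via Lemma~\ref{lem:lin_forms}. Up to sign conventions the linear form $(\gamma\tau_\sigma-\alpha)\omega_{\sigma,2}-(\delta\tau_\sigma-\beta)\omega_{\sigma,1}$ factors as $\omega_{\sigma,1}(\gamma\xi_\sigma-\delta)(\tau_\sigma-M\xi_\sigma)$, so Lemma~\ref{lem:lin_forms} gives
\[
-\log|\tau_\sigma-M\xi_\sigma|\le c_2(\log|\Delta|)^4+\log|\omega_{\sigma,1}|+\log|\gamma\xi_\sigma-\delta|.
\]
Since $M\in\T$ has entries bounded by $1$ in absolute value, and $|\xi_\sigma|$ can be controlled via $|\omega_{\sigma,2}|/|\omega_{\sigma,1}|$ or Lemme~1(iv) of \cite{faisant1987quelques}, the last two terms are absorbed into multiples of $\mathcal M(\xi)$, producing the coefficient $4\mathcal M(\xi)$ in the statement.

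The final ingredient is counting the close-regime embeddings: for each $M\in\T$ and each restriction $\rho=\sigma|_{\Q(\alpha)}$, an extension $\sigma$ in the close regime sends $j$ to a conjugate lying in the $\varepsilon$-disc about $M\xi_\rho\in\bar\F$, so there are at most $\cm(\Delta;M\xi_\rho;\varepsilon)$ such extensions. Summing over $M$ and $\rho$ and rewriting the $\rho$-sum as a $\sigma$-sum (each restriction $\rho$ extends to $[\Q(j,\alpha):\Q(\alpha)]$ embeddings $\sigma$), together with $n\ge\cm(\Delta)$, yields the factor $\frac{1}{16\cm(\Delta)}\sum_\sigma\sum_M\cm(\Delta;M\xi_\sigma;\varepsilon)$, where the $16$ emerges from $|\T|=4$ combined with constants from Lemmas~\ref{lem:lin_log} and~\ref{lem:lin_forms}. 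The main obstacle is this close-regime analysis: reconciling the matrix $M$ used with the $\SL_2(\Z)$-action of Lemma~\ref{lem:st_jbound} against the sign-flipped $\GL_2$-translate at which the linear form of Lemma~\ref{lem:lin_forms} vanishes, and tracking the assorted constants $-\log A_\sigma$, $\log|\omega_{\sigma,1}|$, $\log|\gamma\xi_\sigma-\delta|$ so as to produce exactly the $5\Pen(\xi)+4\mathcal M(\xi)+|\log\varepsilon|$ of the statement.
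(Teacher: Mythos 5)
Your proposal follows the same skeleton as the paper's proof: the unit-height identity, a close/far split with the far part absorbed into $\Pen(\xi)+|\log\varepsilon|$, and the close part handled via Lemma~\ref{lem:st_jbound}, Lemma~\ref{lem:lin_log}, the factorisation of $\tau_\sigma-M\xi_\sigma$ in terms of the linear form $(\gamma\tau_\sigma-\alpha)\omega_{\sigma,2}+(\delta\tau_\sigma-\beta)\omega_{\sigma,1}$ (so Lemma~\ref{lem:lin_forms} applies) together with the count $\cm(\Delta;M\xi_\sigma;\varepsilon)$ and $[\Q(j,\alpha):\Q]\ge\cm(\Delta)$. The only structural difference is that the paper thresholds directly on $|\sigma(j-\alpha)|<\varepsilon_0$ with $\varepsilon_0=\varepsilon\min_\sigma\{1,c(\xi_\sigma)\}$ so the far contribution is immediately $|\log\varepsilon_0|=|\log\varepsilon|+\Pen(\xi)$, and the numerical factors you flag as hard to pin down (the $16$, the multiples of $\Pen$ and $\mathcal M$) are not transparently derived in the paper's own proof either — the printed argument in fact ends with the inner sum over $\sigma\colon\Q(\alpha)\hookrightarrow\C$ rather than $\Q(j,\alpha)\hookrightarrow\C$ — so your caution there is warranted rather than a gap.
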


\begin{proof}
	Let $j(\xi) = \alpha$.
	Since $j-\alpha$ is an algebraic unit the height can be computed by
	\begin{equation}\label{eq:height_sum}
		h(j-\alpha) = \frac 1{[\Q(j,\alpha):\Q]} \sum_{\sigma} \log \max\left\{1, |\sigma(j-\alpha)^{-1}|\right\},
	\end{equation}
	where $\sigma$ runs over all field embeddings $\sigma\colon \Q(j,\alpha) \rightarrow \C$.
	Let
	\begin{displaymath}
		\varepsilon_0 = \varepsilon \cdot \min_{\sigma\colon \Q(\alpha) \rightarrow \C}
				\left\{1, c(\xi_\sigma)\right\}.
	\end{displaymath}
	We split the sum \eqref{eq:height_sum} into terms for which $|\sigma(j-\alpha)| < \varepsilon_0$
	and those for which $1 \ge |\sigma(j-\alpha)| \ge \varepsilon_0$.

	Assume $|\sigma(j-\alpha)| < \varepsilon_0$.
	Thus $|\sigma(j-\alpha)| < c(\xi_\sigma)$.
	We want to show $|\tau_\sigma - M\xi_\sigma| < \varepsilon$ for some $M \in \T$.
	We can apply Lemma \ref{lem:st_jbound} to get $|\tau_\sigma - M\xi_\sigma| < \delta_\sigma$ for some $M \in \T$.
	By definition of $\delta_\sigma$ we have
 	$\delta_\sigma \le \frac{|j'(\xi_\sigma)|}{6|j'(\xi_\sigma)| + 72\cdot 10^5 \max\{1,|j(\xi_\sigma)|\}}$, so
	that we can apply Lemma \ref{lem:lin_log} and $j(\xi_\sigma) = j(M\xi_\sigma)$ to get
	\begin{displaymath}
		\frac{|j'(\xi_\sigma)|}2 |\tau_\sigma - M\xi_\sigma| \le |j(\tau_\sigma) - j(\xi_\sigma)| < \varepsilon_0
		\le \varepsilon c(\xi_\sigma) \le \varepsilon \frac{|j'(\xi_\sigma)|}2\delta_\sigma \le \varepsilon \frac{|j'(\xi_\sigma)|}2,
	\end{displaymath}
	which implies $|\tau_\sigma - M\xi_\sigma| < \varepsilon$.
	But we also get
	\begin{equation}\label{eq:j_to_lin_form}
		|j(\tau_\sigma) - j(\xi_\sigma)| \ge \frac{|j'(\xi_\sigma)|}2 |\tau_\sigma - M\xi_\sigma|.
	\end{equation}
	Note that the right--hand side is not $0$ since $j(\xi_\sigma)$ does not have complex multiplication
	but $j(\tau_\sigma)$ does. The same argument tells us $j'(\xi_\sigma) \not= 0$.
	Write $M = \begin{smatrix}\alpha&\beta\\\gamma&\delta\end{smatrix}$. Then we get
	\begin{align*}
		|\tau_\sigma - M\xi_\sigma| &= \left|\tau_\sigma - M\frac{\omega_{\sigma,2}}{\omega_{\sigma,1}}\right| 	\\
		&= \frac 1{|\gamma\omega_{\sigma,2}+\delta\omega_{\sigma,1}|}
				|(\gamma\tau-\alpha)\omega_{\sigma,2}+(\delta\tau-\beta)\omega_{\sigma,1}|	\\
				&\ge \frac 1{\max\{|\omega_{\sigma,2}|,|\omega_{\sigma,1}|\}}
				|(\gamma\tau-\alpha)\omega_{\sigma,2}+(\delta\tau-\beta)\omega_{\sigma,1}|
	\end{align*}
	since $\gamma$ and $\delta$ are not both $1$.
	Using this together with \eqref{eq:j_to_lin_form} and Lemma \ref{lem:lin_forms} we obtain
	\begin{align}
		\notag
		\log|\sigma(j-\alpha)| &= 
		\log|j(\tau_\sigma) - j(\xi_\sigma)|	\\
		\notag
		&\ge \log\frac{|j'(\xi_\sigma)|}2 + \log\min\{|\omega_{\sigma,1}|^{-1},|\omega_{\sigma,2}|^{-1}\}
							- c_2\left(\log|\Delta|\right)^4	\\
		&\ge \log\frac{|j'(\xi_\sigma)|}2 + \log\min\{1,|\omega_{\sigma,1}|^{-1},|\omega_{\sigma,2}|^{-1}\}
								- c_2\left(\log|\Delta|\right)^4.\label{eq:masser_lin_log}
	\end{align}
	Since with $|\sigma(j-\alpha)| < \varepsilon_0$ we also have $|\tau_\sigma - M\xi_\sigma| < \varepsilon$
	for some matrix $M \in \T$ as mentioned before,
	we obtain that $\tau_\sigma$ corresponds to a form in $\cm(\Delta; M\xi_\sigma; \varepsilon)$.
	We have
	\begin{displaymath}
		c(\xi_\sigma) \le \frac{|j'(\xi_\sigma)|}2 \delta \le \frac{|j'(\xi_\sigma)|}2
	\end{displaymath}
	so that
	\begin{displaymath}
		\log\max\left\{1,\frac 2{|j'(\xi_\sigma)|}\right\} \le \Pen(\xi).
	\end{displaymath}
	Thus, if we use \eqref{eq:masser_lin_log} we get
	\begin{align*}
		h(j-\alpha) \le& -\frac 1{[\Q(j,\alpha):\Q]}  \sum_{\lvert \sigma(j-\alpha) \rvert < \varepsilon_0} \log \lvert \sigma(j-\alpha) \rvert
					+ \lvert \log \varepsilon_0 \rvert	\\
			\le& c_2 \frac{\sum_{\sigma\colon \Q(\alpha)\hookrightarrow \C}\sum_{M \in \T}
									\cm(\Delta;M\xi_\sigma;\varepsilon)}{16\cdot[\Q(j,\alpha):\Q]}
										(\log|\Delta|)^4	\\
										&+ 4\Pen(\xi) - 4\log\min_\sigma\{1,|\omega_{\sigma,1}|^{-1},|\omega_{\sigma,2}|^{-1}\}
										+ |\log \varepsilon_0|.
	\end{align*}
	If we plug in the definition for $\varepsilon_0$ we obtain
	\begin{align*}
			h(j-\alpha) \le&c_2\frac{\sum_{\sigma\colon \Q(\alpha)\hookrightarrow \C}
														\sum_{M \in \T}\cm(\Delta;M\xi_\sigma;\varepsilon)}{16\cdot\cm(\Delta)}
										(\log|\Delta|)^4 + 4\Pen(\xi)	\\
										&+ 4\log\max_\sigma\{1,|\omega_{\sigma,1}|,|\omega_{\sigma,2}|\}
					- \log\min_\sigma\left\{1,c(\xi_\sigma)\right\}
					+ |\log \varepsilon|		\\
			=&c_2\frac{\sum_{\sigma\colon \Q(\alpha)\hookrightarrow \C}\sum_{M \in \T}\cm(\Delta;M\xi_\sigma;\varepsilon)}{16\cdot\cm(\Delta)}
										(\log|\Delta|)^4 + 5\Pen(\xi)	\\
										&+ 4\log\max_\sigma\{1,|\omega_{\sigma,1}|,|\omega_{\sigma,2}|\}
										+ |\log \varepsilon|,
	\end{align*}
	where we also used the first claim of Lemma \ref{lem:lower_trivial} for the inequality.
\end{proof}

\noindent The following lemmas can be found in Section 3 of \cite{bhk} as Lemmas 3.5 and 3.6.
\begin{lem}\label{lem:fbound}
	Assume that $|\Delta| \ge 10^{14}$. Then we have $F(\Delta) \ge |\Delta|^{0.34/\log\log(|\Delta|^{1/2})}$
	and $F(\Delta) \ge 18 \log\log(|\Delta|^{1/2})$.
\end{lem}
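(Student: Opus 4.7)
The plan is to exhibit an explicit integer $a \le |\Delta|^{1/2}$ with $\omega(a)$ as large as possible by choosing $a$ to be a primorial. Define $k$ to be the largest integer with $P_k := p_1 p_2 \cdots p_k \le |\Delta|^{1/2}$, where $p_i$ denotes the $i$-th prime. Then $\omega(P_k) = k$, hence $F(\Delta) \ge 2^k$, and the task reduces to proving a sufficiently strong lower bound for $k$ in terms of $\log|\Delta|$.

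To bound $k$ from below I would invoke the explicit Chebyshev-type estimates of Robin \cite{robin1983estimation} on $\vartheta(x) = \sum_{p \le x}\log p$ and on $\pi(x)$. By maximality of $k$ we have $\vartheta(p_k) = \log P_k \le \tfrac{1}{2}\log|\Delta|$ while $\vartheta(p_{k+1}) > \tfrac{1}{2}\log|\Delta|$. Robin's two-sided bounds on $\vartheta$ then yield an upper bound on $p_k$ of size roughly $\tfrac{1}{2}\log|\Delta|$, and the corresponding bounds on $\pi$ translate this into a lower bound of the form
\begin{displaymath}
k \;\ge\; \pi(p_k) \;\ge\; \frac{\log|\Delta|}{2\log\log(|\Delta|^{1/2})}\bigl(1 - r(|\Delta|)\bigr),
\end{displaymath}
where $r(|\Delta|) \to 0$ as $|\Delta| \to \infty$. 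The hypothesis $|\Delta| \ge 10^{14}$ ensures $|\Delta|^{1/2} \ge 10^7$, which is comfortably inside the range in which Robin's explicit error terms are small.

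For the first inequality I would write $2^k = |\Delta|^{k\log 2/\log|\Delta|}$ and check that $\tfrac{k\log 2}{\log|\Delta|} \ge \tfrac{0.34}{\log\log(|\Delta|^{1/2})}$; the leading coefficient $\log 2/2 \approx 0.3466$ leaves enough slack over $0.34$ to absorb the error $r(|\Delta|)$ for $|\Delta| \ge 10^{14}$. For the second inequality one combines the first with the cutoff (a direct computation shows $P_8 < 10^7 \le |\Delta|^{1/2}$, so already $2^k \ge 256$, while $18\log\log(10^7) \approx 50$), and then checks monotonicity as $|\Delta|$ grows. The only real obstacle is the bookkeeping of constants: one must verify numerically, using Robin's explicit two-sided bounds for $\vartheta$ and $\pi$, that the constants $0.34$ and $18$ are indeed achieved uniformly for every $|\Delta| \ge 10^{14}$, rather than only asymptotically.
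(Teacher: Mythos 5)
The paper does not actually prove this lemma; it simply cites Lemmas~3.5 and~3.6 of \cite{bhk}, so there is no proof of its own to compare against. Your primorial approach with explicit Chebyshev--Robin estimates is the natural route and is almost certainly what \cite{bhk} does. Your boundary check is correct: $P_8 = 9699690 < 10^7 \le |\Delta|^{1/2}$ gives $F(\Delta) \ge 2^8 = 256$, which dominates both $18\log\log(10^7) \approx 50$ and $|\Delta|^{0.34/\log\log(|\Delta|^{1/2})} \approx 51.5$ at $|\Delta| = 10^{14}$, and the asymptotic slack $\log 2/2 \approx 0.3466 > 0.34$ is what the explicit error terms must fill.

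There is, however, a directional slip in the middle of your sketch. From $\vartheta(p_k) \le \tfrac12\log|\Delta|$ and Robin's bounds you say you extract ``an upper bound on $p_k$'', and then use it to conclude $k = \pi(p_k) \ge \cdots$. But an upper bound on $p_k$ only gives an \emph{upper} bound on $k=\pi(p_k)$ since $\pi$ is increasing; what you need for your displayed inequality is a \emph{lower} bound on $p_k$. That lower bound comes from the other half of your maximality observation, $\vartheta(p_{k+1}) > \tfrac12\log|\Delta|$: combined with an explicit upper Chebyshev bound $\vartheta(x)\le Cx$ and Bertrand ($p_{k+1}<2p_k$), it forces $p_k$ to be at least of order $\log|\Delta|$, from which $\pi(p_k) \ge p_k/\log p_k$ produces the claimed $\log|\Delta|/(2\log\log(|\Delta|^{1/2}))(1-r)$. (Equivalently and more directly: bound $\vartheta(p_{k+1}) \le (k+1)\log p_{k+1}$ term by term, so $k+1 > \tfrac12\log|\Delta|/\log p_{k+1}$, and then your upper bound on $p_{k+1}$ controls $\log p_{k+1}$ as $\log\log(|\Delta|^{1/2}) + O(1)$.) With that correction the argument is sound, modulo the constant bookkeeping you rightly flag.
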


\begin{lem}\label{lem:cbound}
	For $\Delta \not= -3,-4$ we have $\cm(\Delta) \le \pi^{-1}|\Delta|^{1/2}(2+\log|\Delta|)$.
\end{lem}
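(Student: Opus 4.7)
The plan is to invoke Dirichlet's analytic class number formula for the order $\Om_\Delta$ of discriminant $\Delta$ and combine it with an elementary bound on $L(1,\chi_\Delta)$. Since $\Delta \ne -3,-4$, the unit group of the order is $\{\pm 1\}$, and the formula in this case states
\begin{displaymath}
	\cm(\Delta) = \frac{|\Delta|^{1/2}}{\pi}\,L(1,\chi_\Delta),
\end{displaymath}
where $\chi_\Delta(n) = \bigl(\frac{\Delta}{n}\bigr)$ is the Kronecker symbol, a non-principal Dirichlet character modulo $|\Delta|$. It therefore suffices to establish $L(1,\chi_\Delta) \le 2 + \log|\Delta|$.

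I would prove this bound by Abel summation. Letting $S(N) = \sum_{n \le N}\chi_\Delta(n)$, one has
\begin{displaymath}
	L(1,\chi_\Delta) = \sum_{n=1}^\infty \frac{S(n)}{n(n+1)}.
\end{displaymath}
The two estimates $|S(n)| \le n$ (trivially, since each term is $0$ or $\pm 1$) and $|S(n)| \le |\Delta|$ (by periodicity combined with $\sum_{n=1}^{|\Delta|}\chi_\Delta(n) = 0$) applied for $n \le |\Delta|$ and $n > |\Delta|$ respectively then yield
\begin{displaymath}
	L(1,\chi_\Delta) \le \sum_{n=1}^{|\Delta|}\frac{1}{n+1} + |\Delta|\sum_{n > |\Delta|}\frac{1}{n(n+1)} \le \log(|\Delta|+1) + 1 \le \log|\Delta| + 2,
\end{displaymath}
which together with the class number formula gives the lemma.

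The only mildly delicate point is justifying the class number formula in the displayed form when $\Delta = Df^2$ is non-fundamental. Starting from the classical $\cm(D) = \pi^{-1}|D|^{1/2}L(1,\chi_D)$ for the fundamental discriminant $D < -4$, the order-index formula $\cm(\Delta) = \cm(D)\,f\,\prod_{p \mid f}(1 - \chi_D(p)/p)$ (the unit-index factor is trivial since $D \ne -3,-4$) collapses to the stated identity via the Euler-product relation $L(1,\chi_\Delta) = L(1,\chi_D)\prod_{p \mid f}(1 - \chi_D(p)/p)$. This is routine bookkeeping rather than a real obstacle; the substantive content of the lemma is the partial-summation bound on $L(1,\chi_\Delta)$.
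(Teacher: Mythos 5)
Your proof follows the same route as the paper's: the class number formula $\cm(\Delta) = \pi^{-1}|\Delta|^{1/2}L(1,\chi_\Delta)$ combined with the elementary partial-summation bound $L(1,\chi_\Delta) \le 2 + \log|\Delta|$. The paper simply cites Hua (Theorems 10.1 and 14.3 of \cite{hua2012introduction}) for exactly these two ingredients, whereas you supply the proofs; mathematically it is the same argument.

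One inaccuracy in your bookkeeping for non-fundamental $\Delta$: the hypothesis $\Delta \neq -3,-4$ does not give you $D \neq -3,-4$. For instance $\Delta = -12$ has $D=-3$, $f=2$, and $\Delta=-16$ has $D=-4$, $f=2$, yet both are allowed. So you cannot start from ``the classical formula for $D < -4$'' and declare the unit-index factor trivial. What actually happens is that the order-index formula reads $\cm(\Delta) = \cm(D)\, f\, [\Om_D^\times : \Om_\Delta^\times]^{-1}\prod_{p\mid f}(1-\chi_D(p)/p)$ with $[\Om_D^\times : \Om_\Delta^\times] = w_D/2$ (since $w_\Delta = 2$), and this $w_D$ cancels the $w_D$ appearing in $\cm(D) = (w_D/2\pi)|D|^{1/2}L(1,\chi_D)$, yielding $\cm(\Delta) = \pi^{-1}|\Delta|^{1/2}L(1,\chi_\Delta)$ in every case $\Delta \neq -3,-4$. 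Your displayed identity is therefore correct, but the justification should track this cancellation rather than assume $D < -4$.
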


\begin{proof}
	Theorem 10.1 in \cite{hua2012introduction} says $\cm(\Delta) \le \frac{\omega |\Delta|^{1/2}}{2\pi}K(d)$
	where $K(d)$ can be bounded by $2+\log|\Delta|$ according to Theorem 14.3 in \cite{hua2012introduction}
	and $\omega$ is the number of roots of unity in the imaginary quadratic order of discriminant
	$\Delta$. But since $\Delta \not= -3,-4$ we have $\omega =2$ and the result follows.
\end{proof}

\noindent We define $E := E(\Delta) = F(\Delta) (\log|\Delta|)^4$.

\begin{cor}\label{cor:upper_bound}
	Assume $j-\alpha$ is a unit and $\alpha = j(\xi)$ with $\xi \in \F$ and such that $\alpha$
	is algebraic but not a singular modulus.
	For $|\Delta| \ge 10^{14}$ we have
	\begin{displaymath}
		h(j-\alpha) \le c_2[\Q(\alpha):\Q]\frac{E(\Delta)}{2\cm(\Delta)} + \log\frac{E(\Delta)|\Delta|^{1/2}}{\cm(\Delta)} + C'
	\end{displaymath}
	where $C'$ is a constant depending on $\alpha$ and is given by
	\begin{align*}
		C' = 4[\Q(\alpha):\Q]c_2 + 5 \Pen(\xi) + 4\mathcal{M}(\xi).
	\end{align*}
\end{cor}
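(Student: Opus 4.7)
The plan is to insert the uniform counting bound of Corollary \ref{cor:eps_nbh} into the height inequality of Proposition \ref{prop:ht_bound_w_sum} and then optimize the free parameter $\varepsilon$. Writing the double sum $S(\varepsilon) = \sum_\sigma \sum_{M\in\T} \cm(\Delta;M\xi_\sigma;\varepsilon)$ appearing there and bounding each summand uniformly by Corollary \ref{cor:eps_nbh}, I get
\begin{displaymath}
\frac{S(\varepsilon)}{\cm(\Delta)} \le \frac{[\Q(\alpha):\Q]}{4}\,F(\Delta)\bigl(32|\Delta|^{1/2}\varepsilon^2\log\log(|\Delta|^{1/2}) + 11|\Delta|^{1/2}\varepsilon + 2\bigr),
\end{displaymath}
where the factor $[\Q(\alpha):\Q]/4$ comes from grouping the $4\,[\Q(j,\alpha):\Q]$ summands by their restriction to $\Q(\alpha)$ and using $[\Q(j,\alpha):\Q(\alpha)] \le \cm(\Delta)$.

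The critical step is the choice
\begin{displaymath}
\varepsilon = \frac{\cm(\Delta)}{E(\Delta)|\Delta|^{1/2}},
\end{displaymath}
which is calibrated so that $F(\Delta)|\Delta|^{1/2}\varepsilon = \cm(\Delta)/(\log|\Delta|)^4$. After multiplying by $c_2(\log|\Delta|)^4/16$, the constant summand $2F(\Delta)$ produces exactly the principal term $c_2[\Q(\alpha):\Q]E(\Delta)/(2\cm(\Delta))$ of the target, while the linear summand $11 F(\Delta)|\Delta|^{1/2}\varepsilon$ yields the constant $(11/4)[\Q(\alpha):\Q]c_2$, which is absorbed into $4[\Q(\alpha):\Q]c_2$. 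The quadratic summand is handled by invoking Lemma \ref{lem:fbound} to replace $\log\log(|\Delta|^{1/2})$ by $F(\Delta)/18$ and then Lemma \ref{lem:cbound} to bound $\cm(\Delta) \le \pi^{-1}|\Delta|^{1/2}(2+\log|\Delta|)$; the resulting piece is of order $\cm(\Delta)/((\log|\Delta|)^4|\Delta|^{1/2})$ and is therefore negligible for $|\Delta| \ge 10^{14}$. Finally, $|\log\varepsilon| = \log(E(\Delta)|\Delta|^{1/2}/\cm(\Delta))$ matches the log-term in the target, and $5\Pen(\xi) + 4\mathcal{M}(\xi)$ contributes to $C'$.

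Two pieces of housekeeping remain. First, the hypothesis $0 < \varepsilon < 1/4$ required by Corollary \ref{cor:eps_nbh} holds for $|\Delta| \ge 10^{14}$ using the bound on $\cm(\Delta)$ from Lemma \ref{lem:cbound} together with the trivial lower bound $E(\Delta) \ge (\log|\Delta|)^4$. Second, the auxiliary hypothesis $|\Delta| \ge \max\{2D,e^{12\pi h}\}$ from Proposition \ref{prop:ht_bound_w_sum} depends only on $\alpha$ and can be treated as a $C'$-level assumption. The main obstacle here is not conceptual but numerical bookkeeping: one must ensure that the $\varepsilon$-linear residue $(11/4)[\Q(\alpha):\Q]c_2$ and the tiny $\varepsilon$-quadratic remainder together fit inside the stated slack $4[\Q(\alpha):\Q]c_2$, so that the final constant $C' = 4[\Q(\alpha):\Q]c_2 + 5\Pen(\xi) + 4\mathcal{M}(\xi)$ emerges exactly.
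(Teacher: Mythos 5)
Your overall strategy matches the paper exactly: substitute the counting bound of Corollary~\ref{cor:eps_nbh} into Proposition~\ref{prop:ht_bound_w_sum}, choose $\varepsilon = \cm(\Delta)/(E(\Delta)|\Delta|^{1/2})$, and split the contribution of the three summands. The constant summand, the size of the linear residue $\tfrac{11}{4}[\Q(\alpha):\Q]c_2$, the negligibility of the quadratic piece via Lemmas~\ref{lem:fbound} and \ref{lem:cbound}, the identification $|\log\varepsilon|=\log(E|\Delta|^{1/2}/\cm(\Delta))$, and the check that $\varepsilon<1/4$ from the trivial bound $F(\Delta)\ge 1$ are all in line with the paper's proof (the paper uses $F(\Delta)\ge 256$, which is harmless overkill).

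However, the bookkeeping of the double sum is wrong, and the gap is not merely cosmetic. You read the sum in Proposition~\ref{prop:ht_bound_w_sum} as being over $\sigma\colon\Q(j,\alpha)\hookrightarrow\C$, with $4[\Q(j,\alpha):\Q]$ summands, and then group by restriction to $\Q(\alpha)$ using $[\Q(j,\alpha):\Q(\alpha)]\le\cm(\Delta)$. But if you carry this out honestly, the inequality $[\Q(j,\alpha):\Q(\alpha)]\le\cm(\Delta)$ precisely \emph{cancels} the $\cm(\Delta)$ sitting in the $16\cm(\Delta)$ denominator of the proposition. Concretely, $S(\varepsilon) = [\Q(j,\alpha):\Q(\alpha)]\sum_{\sigma_0\colon\Q(\alpha)\hookrightarrow\C}\sum_{M}\cm(\Delta;M\xi_{\sigma_0};\varepsilon)$, so
\begin{displaymath}
c_2\frac{S(\varepsilon)}{16\cm(\Delta)}(\log|\Delta|)^4
\le c_2\frac{4[\Q(\alpha):\Q]F(\Delta)\bigl(32|\Delta|^{1/2}\varepsilon^2\log\log(|\Delta|^{1/2})+11|\Delta|^{1/2}\varepsilon+2\bigr)}{16}(\log|\Delta|)^4,
\end{displaymath}
and the constant summand $2F(\Delta)$ then gives $c_2[\Q(\alpha):\Q]E(\Delta)/2$, \emph{without} the crucial $1/\cm(\Delta)$ in the target principal term. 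Your displayed claim $\frac{S(\varepsilon)}{\cm(\Delta)} \le \frac{[\Q(\alpha):\Q]}{4}F(\Delta)(\cdots)$ also cannot follow from the counting you describe for the same reason: the $\cm(\Delta)$ does not cancel. The resolution, which the paper uses implicitly, is that the index $\sigma$ in the proposition already runs only over $\Q(\alpha)\hookrightarrow\C$ (the statement's $\Q(j,\alpha)$ is a slip; compare the last display in the proof of Proposition~\ref{prop:ht_bound_w_sum} and the first display of the paper's proof of this corollary). With that reading the sum has only $4[\Q(\alpha):\Q]$ terms, no grouping is needed, and the $16\cm(\Delta)$ denominator survives to produce the $E(\Delta)/(2\cm(\Delta))$ in the target. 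You should fix the derivation by reading the proposition this way rather than by grouping.

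One smaller remark: the auxiliary hypothesis $|\Delta|\ge\max\{2D,e^{12\pi h}\}$ from Proposition~\ref{prop:ht_bound_w_sum} is a threshold on $|\Delta|$, not something that can be absorbed into the additive constant $C'$; the paper simply suppresses it at this stage and folds it into the final explicit bound on $|\Delta|$ in the main theorem.
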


\begin{proof}
	We can use the previous results together with the bound for $\cm(\Delta; \xi_\sigma; \varepsilon)$ to bound the
	height $h(j-\alpha)$.
	Put $\varepsilon = \frac{\cm(\Delta)}{F(\Delta)(\log|\Delta|)^4 |\Delta|^{1/2}}$.
	Since we assume that $|\Delta| \ge 10^{14}$ we have $F(\Delta) \ge 256$ and we obtain together with Lemma \ref{lem:cbound}
	\begin{displaymath}
		\varepsilon \le \frac{\pi^{-1}|\Delta|^{1/2}(2+\log|\Delta|)}{256(\log|\Delta|)^4 |\Delta|^{1/2}}
		\le \frac 1{256\pi} \frac{2+\log 10^{14}}{\log 10^{14}} < 2\cdot 10^{-3}.
	\end{displaymath}
	Thus we can apply Proposition \ref{prop:ht_bound_w_sum} together with Corollary \ref{cor:eps_nbh} and obtain
	\begin{align*}
			h(j-\alpha)\le&[\Q(\alpha):\Q]c_2\frac{4F(\Delta)\left(32 |\Delta|^{1/2} \varepsilon^2 \log\log(|\Delta|^{1/2})
								+11\vert\Delta\vert^{1/2}\varepsilon + 2\right)
						}{16\cdot\cm(\Delta)}
										(\log|\Delta|)^4	\\
						&+ 5\Pen(\xi) + 4\mathcal{M}(\xi) + |\log \varepsilon|	\\
			=&[\Q(\alpha):\Q]c_2E\frac{128 |\Delta|^{1/2} \log\log(|\Delta|^{1/2})
						}{16\cdot\cm(\Delta)}
	 					\left(\frac{\cm(\Delta)}{F(\Delta)(\log|\Delta|)^4 |\Delta|^{1/2}}\right)^2	\\
			&+[\Q(\alpha):\Q]c_2E\frac{44\vert\Delta\vert^{1/2}
						}{16\cdot\cm(\Delta)}
	 					\frac{\cm(\Delta)}{F(\Delta)(\log|\Delta|)^4 |\Delta|^{1/2}}	\\
			&+[\Q(\alpha):\Q]c_2\frac{E}{2\cm(\Delta)} \\
			&+ 5\Pen(\xi) + 4\mathcal{M}(\xi)
			+ \log\left(\frac{F(\Delta)(\log|\Delta|)^4 |\Delta|^{1/2}}{\cm(\Delta)} \right).
	\end{align*}
	We continue the estimate by simplifying the terms to get
	\begin{align*}
			h(j-\alpha)\le&[\Q(\alpha):\Q]c_2\frac{8 \log\log(|\Delta|^{1/2})
						}{F(\Delta)}
	 					\frac{\cm(\Delta)}{(\log|\Delta|)^4 |\Delta|^{1/2}}	\\
			&+3[\Q(\alpha):\Q]c_2
			+[\Q(\alpha):\Q]c_2\frac{E}{2\cm(\Delta)} \\
			&+ 5\Pen(\xi) + 4\mathcal{M}(\xi) + \log\left(\frac{E|\Delta|^{1/2}}{\cm(\Delta)} \right).
	\end{align*}
	Now we apply Lemma \ref{lem:fbound} to see
	\begin{align*}
			h(j-\alpha)
			\le&[\Q(\alpha):\Q]c_2\frac{1}{2}
	 					\frac{\cm(\Delta)}{(\log|\Delta|)^4 |\Delta|^{1/2}}	\\
			&+3[\Q(\alpha):\Q]c_2
			+[\Q(\alpha):\Q]c_2\frac{E}{2\cm(\Delta)} \\
			&+5\Pen(\xi) + 4\mathcal{M}(\xi)
			+ \log\left(\frac{E|\Delta|^{1/2}}{\cm(\Delta)} \right).
	\end{align*}
	We continue the estimate using Lemma \ref{lem:cbound}
	\begin{align*}
			h(j-\alpha)\le&[\Q(\alpha):\Q]c_2\frac{1}{2\pi}
	 					\frac{|\Delta|^{1/2}(2+\log|\Delta|)}{(\log|\Delta|)^4 |\Delta|^{1/2}}	\\
			&+3[\Q(\alpha):\Q]c_2
			+[\Q(\alpha):\Q]c_2\frac{E}{2\cm(\Delta)} \\
			&+5\Pen(\xi) + 4\mathcal{M}(\xi)
			+ \log\left(\frac{E|\Delta|^{1/2}}{\cm(\Delta)} \right).
	\end{align*}
	Simplifying again results into
	\begin{align*}
			h(j-\alpha)\le&[\Q(\alpha):\Q]c_2\frac{1}{2\pi}
	 					\frac{(2+\log|\Delta|)}{(\log|\Delta|)^4}	\\
			&+3[\Q(\alpha):\Q]c_2
			+[\Q(\alpha):\Q]c_2\frac{E}{2\cm(\Delta)} \\
			&+5\Pen(\xi) + 4\mathcal{M}(\xi)
			+ \log\left(\frac{E|\Delta|^{1/2}}{\cm(\Delta)} \right)
	\end{align*}
	The function $x \mapsto \frac{2+\log x}{(\log x)^4}$ is decreasing for $x > 1$.
	Thus we can substitute $x = 10^{14}$ to continue the bound and get
	\begin{align*}
			h(j-\alpha)\le&[\Q(\alpha):\Q]c_2\frac{1}{2\pi}
	 					\frac{35}{32^4}	\\
			&+3[\Q(\alpha):\Q]c_2
			+[\Q(\alpha):\Q]c_2\frac{E}{2\cm(\Delta)} \\
			&+5\Pen(\xi) + 4\mathcal{M}(\xi)
			+ \log\left(\frac{E|\Delta|^{1/2}}{\cm(\Delta)} \right).
	\end{align*}
	This gives the desired inequality.
\end{proof}

\section{Proof of the main theorem}

We now want to bound $\Delta$ to complete the main proof.
We will do this by using the lower and upper bounds we derived in the last section.
Throughout this section we assume $|\Delta| \ge 10^{50}$.

Put
\begin{equation}\label{eq:const_C}
	C = C' + h(\alpha) + \log 2 + 0.01.
\end{equation}
Combining the lower bounds for $h(j-\alpha)$ from Lemmas \ref{lem:lower_trivial} and \ref{lem:lower_hard}
with the upper bound from Corollary \ref{cor:upper_bound} we obtain
the inequality
\begin{displaymath}
	L:=\max\left\{\pi\frac{|\Delta|^{1/2}}{\cm(\Delta)}, \frac 3{\sqrt5}\log|\Delta| - 10 \right\}
	\le [\Q(\alpha):\Q]c_2\frac{E(\Delta)}{2\cm(\Delta)} + \log\frac{E(\Delta)|\Delta|^{1/2}}{\cm(\Delta)} + C
\end{displaymath}
or equivalently
\begin{displaymath}
	1 \le [\Q(\alpha):\Q]c_2\frac E{2L\cdot\cm(\Delta)} + \frac{\log E + C}L + \frac{\log(|\Delta|^{1/2}/\cm(\Delta))}L.
\end{displaymath}
For the remainder we assume that $|\Delta|$ is large enough so that $\log E + C \ge 0$.
By Lemma \ref{lem:fbound} this is the case when $|\Delta| \ge e^{e^{e^{-C}/18}}$.
This in turn is true whenever $|\Delta| \ge 3$.
Since $\frac 3{\sqrt{5}}\log |\Delta| - 10 > 0$ for $|\Delta| \ge 10^{14}$
this allows us to replace $L$ by $\frac 3{\sqrt{5}}\log |\Delta| - 10$ in the middle term. Similarly we can replace $L$
in the first term by $\pi|\Delta|^{1/2}/\cm(\Delta)$ and obtain
\begin{align}
	1 &\le [\Q(\alpha):\Q]c_2\frac E{2\pi |\Delta|^{1/2}} + \frac{\log E + C}{\frac 3{\sqrt5}\log |\Delta| -10}
			+ \frac{\log(|\Delta|^{1/2}/\cm(\Delta))}L\nonumber	\\
			&\le [\Q(\alpha):\Q]c_2\frac E{2\pi |\Delta|^{1/2}} + \frac{\log E}{\frac 3{\sqrt{5}}\log |\Delta| -10}
			+ \frac{\log(\pi^{-1}L)}L + \frac{C}{\frac 3{\sqrt5}\log |\Delta| -10}\label{eq:smaller1}
\end{align}

We want to show that the right--hand side is less than 1 for large enough $|\Delta|$. Before we start, we want to give
a bound on $E(\Delta) = F(\Delta)(\log|\Delta|)^4$. To do this, we are going to bound $\log F(\Delta)$ and $\log E(\Delta)$.
By Th\'eor\`eme 1.1 in \cite{robin1983estimation} we have
\begin{displaymath}
	\omega(n) \le 1.4 \frac{\log n}{\log\log n}
\end{displaymath}
for any $n \ge 3$.
Therefore we obtain the bound
\begin{displaymath}
	\frac{\log F(\Delta)}{\log 2} \le 1.4 \frac{\log |\Delta|}{\log\log|\Delta|}.
\end{displaymath}
Then the bound on $\log E(\Delta)$ is given by
\begin{displaymath}
	\log E(\Delta) \le \frac{\log|\Delta|}{\log\log|\Delta|} + 4\log\log|\Delta|.
\end{displaymath}

Now we want to bound $E|\Delta|^{-1/2}$. Since the function
\begin{displaymath}
	u_0(x) = \frac{1}{\log\log x} + \frac{4\log\log x}{\log x} - \frac 12
\end{displaymath}
is decreasing for $x \ge 10^{10}$ we obtain
\begin{displaymath}
	\frac{\log(E|\Delta|^{-1/2})}{\log|\Delta|} \le u_0(10^{50}) < -\frac 1{10}
\end{displaymath}
for $|\Delta| \ge 10^{50}$.
This in turn implies
\begin{equation}\label{eq:first}
	E |\Delta|^{-1/2} < |\Delta|^{-0.1}.
\end{equation}

The next step is to bound the second term of \eqref{eq:smaller1}. The functions
\begin{displaymath}
	u_1(x) = \log 2 \frac 1{\log\log x - c_1 - \log 2} + 4\frac{\log\log x}{\log x}
\end{displaymath}
and
\begin{displaymath}
	u_2(x) = \left( \frac 3{\sqrt{5}} - \frac{10}{\log x} \right)^{-1}
\end{displaymath}
are decreasing for $x \ge 10^{10}$. We have
\begin{equation}\label{eq:second}
	\frac{\log E}{\frac 3{\sqrt5}\log |\Delta| -10} \le u_1(|\Delta|) u_2(|\Delta|) \le u_1(10^{50}) u_2(10^{50})
	\le 0.4896
\end{equation}
for $|\Delta| \ge 10^{50}$.

To bound the third term of \eqref{eq:smaller1} we remark that the function $x \mapsto x^{-1}\log(\pi^{-1}x)$
is decreasing for $x \ge e/\pi$.
We have $L \ge \frac 3{\sqrt{5}}\log |\Delta| - 10 \ge e/\pi$ for $\Delta \ge 10^{15}$ and therefore
\begin{displaymath}
	\frac{\log(\pi^{-1}L)}L
	\le \frac{\log\left(\pi^{-1}\left(\frac{3}{\sqrt{5}}\log|\Delta| - 10\right)\right)}{\frac{3}{\sqrt{5}}\log|\Delta| - 10}.
\end{displaymath}
The function
\begin{displaymath}
	u_3(x) := \frac{\log\left(\pi^{-1}\left(\frac{3}{\sqrt{5}}\log x - 10\right)\right)}{\frac{3}{\sqrt{5}}\log x - 10}
\end{displaymath}
is decreasing for $x \ge 10^{15}$. Thus we obtain
\begin{equation}\label{eq:third}
	\frac{\log(\pi^{-1}L)}L \le u_3(|\Delta|) \le u_3(10^{15}) < 0.0674 < \frac 1{10}
\end{equation}
for $|\Delta| \ge 10^{50}$.

For $|\Delta| \ge e^{10\frac{\sqrt{5}}{3}(C+1)}$ we have
\begin{equation}
	\frac{C}{\frac 3{\sqrt5}\log |\Delta| -10} \le \frac 1{10}.
\end{equation}
By equation \eqref{eq:first} we can bound the first term of \eqref{eq:smaller1} by
\begin{displaymath}
	\left(\frac{[\Q(\alpha):\Q]c_2}{2\pi}\right) |\Delta|^{-0.1} \le \frac 1{10}
\end{displaymath}
for $|\Delta| \ge (10[\Q(\alpha):\Q]c_2/(2\pi))^{10}$.

Using these two inequalities together with \eqref{eq:second} and \eqref{eq:third} we obtain that
\eqref{eq:smaller1} is less than $1$ for all
\begin{equation}\label{eq:delta_bound_max}
	|\Delta| \ge \max\left\{10^{50}, e^{10\frac{\sqrt{5}}{3}(C+1)},
	(10[\Q(\alpha):\Q]c_2/(4\pi))^{10}\right\}.
\end{equation}
This contradicts the lower bound of \eqref{eq:smaller1}.	\\
The lower bound on $|\Delta|$ can be simplified.
In equation \eqref{eq:const_C} we have put $C = 4[\Q(\alpha):\Q]c_2 + 5 \Pen(\xi) + 4\mathcal{M}(\xi) + h(\alpha) + \log 2 + 0.01$.
Recall that $c_2 \ge 1$. This implies $C \ge 13$ and hence
\begin{displaymath}
	e^{15C} \ge e^{195} \ge 10^{50}.
\end{displaymath}
Moreover, we have
\begin{align*}
	15C \ge {10\frac{\sqrt{5}}{3}(C+1)} &\ge {10\frac{\sqrt{5}}{3}2[\Q(\alpha):\Q]c_2}
	\ge {10\frac{5}{4\pi}2[\Q(\alpha):\Q]c_2}	\\
	&\ge {10\log\left(\frac{5}{4\pi}2[\Q(\alpha):\Q]c_2\right)}.
\end{align*}
Therefore, the bound on $|\Delta|$ from equation \eqref{eq:delta_bound_max} simplifies to
\begin{equation}\label{eq:delta_bound}
	|\Delta| \ge e^{15C},
\end{equation}
where $C > 0$ is the (computable) constant
\begin{displaymath}
	C = 2[\Q(\alpha):\Q]c_2 + 6 \Pen(\xi) + 4\mathcal{M}(\xi) + h(\alpha) + \log 2 + 0.01.
\end{displaymath}

\vspace{3cm}
\addcontentsline{toc}{chapter}{\bibname}
\thispagestyle{plain}
\printbibliography

\end{document}